\documentclass[12pt,reqno]{amsart}
\usepackage{amsmath,amsfonts,amssymb,bbm}

\usepackage{graphicx}%%, caption}
%\captionsetup{width=.8\linewidth}
\usepackage{pgf}
\usepackage{tikz}
%%\usetikzlibrary{arrows,automata}

\usepackage{pgfplots}\pgfplotsset{compat=1.9}
\usepgfplotslibrary{fillbetween}

\usepackage{enumerate}
\usepackage{comment}

\usepackage{caption}
\usepackage{subcaption}

\captionsetup[subfloat]{labelfont=normalfont}

\usepackage[colorlinks=true,linkcolor=blue,citecolor=magenta,urlcolor=cyan]{hyperref}

\newtheorem{theorem}{Theorem}[section]

\newtheorem{lemma}[theorem]{Lemma}

\newtheorem{remark}[theorem]{Remark}

\usepackage[utf8]{inputenc} % allow utf-8 input
\usepackage[T1]{fontenc}    % use 8-bit T1 fonts
\usepackage{lmodern}
\usepackage{hyperref}       % hyperlinks
\usepackage{url}            % simple URL typesetting
\usepackage{booktabs}       % professional-quality tables
\usepackage{nicefrac}       % compact symbols for 1/2, etc.
\usepackage{microtype}      % microtypography

\usepackage{color}

\def\R{\ensuremath{\mathbb{R}}}
\def\N{\ensuremath{\mathbf{N}}}

\def\sgn{\mathrm{sgn}}

\newcommand{\wf}{\widetilde{F}}

\textwidth=6 true in
\hoffset=-0.5 true in
\textheight=9 true in
\voffset=-0.4 true in

\renewcommand{\phi}{\varphi}
\newcommand{\eps}{\varepsilon}
\newcommand{\from}{\colon}

\newcounter{Ax}
\newcounter{Bx}
\newcounter{Cx}

\newcommand{\itemA}{%
    \addtocounter{Ax}{1}
    \item[A\theAx.]}

\newcommand{\itemC}{%
    \addtocounter{Cx}{1}
    \item[C\theCx.]}

%**********************************************************************
%***    MAIN DOCUMENT BEGINS HERE
%**********************************************************************
\begin{document}
\title{
Interval maps mimicking circle rotations}
\begin{abstract}
We investigate the dynamics of maps of the real line whose behavior on an invariant interval is close to a rational rotation on the circle. We concentrate on a specific two-parameter family, describing the dynamics arising from models in game theory, mathematical biology and machine learning.
%%that serves as a model in problems in mathematical biology and ******************. 
If one parameter is a rational number, $k/n$, with $k,n$ coprime, and the second one is large enough, we prove that there is a periodic orbit of period $n$. It behaves like an orbit of the circle rotation by an angle $2\pi k/n$ and attracts trajectories of Lebesgue almost all starting points. We also discover numerically other interesting phenomena. While we do not give rigorous proofs for them, we provide convincing explanations.
\end{abstract}

\author
[J.~Bielawski]
{Jakub Bielawski$^1$}
\address{$^1$\,%
Department of Mathematics, Krakow University of Economics, Rakowicka 27, 31-510 Krak\'{o}w, Poland}
\email{bielawsj@uek.krakow.pl}

\author
[T.~Chotibut]
{Thiparat Chotibut$^2$}
\address{$^{2}$\,%
Chula Intelligent and Complex Systems Lab, Department of Physics, Chulalongkorn University, Bangkok 10330, Thailand}
\email{thiparat.c@chula.ac.th}

\author
[F.~Falniowski]
{Fryderyk Falniowski$^{3}$}
\address{$^{3}$\,%
Department of Mathematics, Krakow University of Economics, Rakowicka 27, 31-510 Krak\'{o}w, Poland}
\email{falniowf@uek.krakow.pl}

\author
[M.~Misiurewicz]
{Micha{\l} Misiurewicz$^{4}$}
\address{$^{4}$\,%
Department of Mathematical Sciences, Indiana University Indianapolis, 402 N. Blackford Street, Indianapolis, Indiana 46202, USA}
\email{mmisiure@iu.edu}

\author
[G.~Piliouras]
{Georgios Piliouras$^{5}$}
\address{$^{5}$\,%
Google DeepMind, London EC4A 3TW, United Kingdom}
\email{gpil@google.com}

\keywords{One-dimensional maps, circle rotation}

\maketitle

\section{Introduction}

This paper is mainly focused on the study of a remarkable family of maps
$F\from \R\to\R$, given by
\begin{equation}\label{i1}
F(x)=x+b-\frac1{e^{-ax}+1},
\end{equation}
parametrized by two parameters $a>0$ and $b\in (0,1)$. This family,
with different parametrization,
arises from the update rule in mathematical biology (two predator-one-prey
model), see ~\cite{eirola1996chaotic,kryzhevich2021bistability}, as well as in  machine learning and game theory,
%%arises from problems in mathematical
%%biology (two predator-one-prey model),
%%see~\cite{eirola1996chaotic,kryzhevich2021bistability}, as well as in
%%the artificial intelligence problems,
see~\cite{bielawskiheterogeneity,bielawski2022memory,chotibut2021family}. Since apparently
the paper~\cite{eirola1996chaotic} by
Eirola, Osipov and S\"oderbacka is the first one where it appeared, we
will call maps of this family \emph{EOS maps}.

For important range of parameters, those maps have an invariant
interval that attracts all trajectories. And on this interval, if we
identify its endpoints (to get a circle) it looks almost like a circle
rotation. For a rational rotation on a circle, every point is periodic
of the same period. If our interval map is (mostly) very close to this
rotation, we may expect that at least one periodic orbit of this
period and with the same permutation will be present. We prove that
for appropriate range of parameters this is the case, and additionally
this surviving orbit is attracting.

From purely mathematical point of view, the problem we tackle is as
follows. We start with a rotation on the circle. Then we cut the
circle at one point, making it an interval. Our map develops a (big)
jump discontinuity. In order to make it continuous, we modify it in a
small neighborhood of the point of discontinuity. Will the resulting
continuous interval map still be in some sense similar to the circle
rotation? The answer is yes, if we do it the right way, although we
add a lot of dynamics we do not know much about.

We also investigate the family of EOS maps numerically and discover
some interesting phenomena. For example, if the rotation of the circle
is by $k/n$ (that is, by the angle $2\pi k/n$), the bifurcation diagrams
for the same $n$, but different $k$, look very similar to each other.
We explain this phenomenon, although we do not provide a rigorous
proof of it.

The paper is organized as follows. In Section~\ref{sec-naa} we
introduce notation and state conditions that guarantee the results we
want. In Section~\ref{Section3} we prove our main theorem
(Theorem~\ref{thmpern}). In Section~\ref{seceos} we show that for a
rational $b$ and sufficiently large $a$ the corresponding EOS map
satisfies the assumptions of this theorem. In Section~\ref{secbif} we
describe and explain other interesting properties of the family of the
EOS maps.

\section{Notation and assumptions}
\label{sec-naa}
Let the map $F\colon \mathbb{R}\to\mathbb{R}$ be given by
\begin{equation}\label{en1}
F(x)=x+b-g(x),
\end{equation}
where $b \in (0,1)$ is a parameter and $g\colon \mathbb{R}\to (0,1)$ is a $C^3$ map. We will assume that its Schwarzian derivative, that is
\[SF=\frac{F'''}{F'}-\frac 32 \left(\frac{F''}{F'}\right)\]
is negative.
We make the following assumptions on the map $g$:
\begin{enumerate}[(A)] 
\item \label{aa} there exist $y_-,y_+$, with $b-1<y_-<0<y_+<b$ such that
\begin{enumerate}
\itemA $0<g'(x)<1$ on $(-\infty,y_-)\cup (y_+,\infty)$,
\itemA $g'(y_-)=g'(y_+)=1$,
\itemA $g'(x)>1$ on $(y_-,y_+)$,
\end{enumerate}
\item \label{bb} $g(b-1)<b<g(b)$.
\end{enumerate}

In particular, $g$ is an increasing diffeomorphism.

Condition \eqref{aa} implies that $F$ is increasing on $(-\infty,y_-)$ and $(y_+,\infty)$, and
decreasing on $(y_-,y_+)$, and condition \eqref{bb} guarantees that
\begin{equation}
F(b-1)>b-1\ \ \textrm{and}\ \ F(b)<b.
\end{equation} 
Moreover, by the definition of $g$ we have $F(y_+)>y_++b-1>b-1$, and $F(y_-)<b$, so \[
F([b-1,b])\subset [b-1,b].
\]
Because $F(x)=x$ if and only if $g(x)=b$, thus as $F$ is increasing outside of $[b-1,b]$ and, by \eqref{bb}, $F(x)>x$ to the left of $b-1$, $F(x)<x$ to the right of $b$, the trajectory of every point outside of $[b-1,b]$ eventually falls into it. % and  the fixed point of $F$ belongs to $[b-1,b]]$, 
%the interval $[b-1,b]$ is attracting. 
Thus, conditions \eqref{aa} and \eqref{bb} guarantee that $[b-1,b]$ is a globally attracting invariant interval for $F$.

%{\color{blue} [FF: Rozdzieli{\l}em te wlasnosci, bo efektywnie dalej wykorzystujemy juz tylko wlasnosci (C). Przy okazji, (C3) i (C4) nie są potrzebne do pokazania niezmienniczosci [b-1,b] - wystarczy, ze $0<g(x)<1$ dla kazdego $x$.]}
Finally, we assume that 
\begin{itemize}
\item[(C)] \label{cc} there exists $\varepsilon>0$ such that
\begin{enumerate}
\itemC \label{c1} $g(x)<\varepsilon$ for any $x<-\frac{1}{2n}$,
\itemC \label{c2} $g(x)>1-\varepsilon$ for any $x>\frac{1}{2n}$,
\itemC $(n-1)\varepsilon<1-g(y_+)$ and
  $(n-1)\varepsilon<\frac{1}{2n}-\big(1-g(y_+)+y_+\big)$,
\itemC $(n-1)\varepsilon<g(y_-)$ and
  $(n-1)\varepsilon<\frac{1}{2n}-\big(g(y_-)-y_-\big)$.
\end{enumerate}
\end{itemize}
%{\color{blue} FF: Probowalem wygenerowac cross-referencing z C1 etc, ale na razie poleglem - powalcze z tym pozniej}

We denote the set of maps $F$ given by \eqref{en1} and satisfying all of the above assumptions %with $g$ fulfilling conditions (A-C) 
by $\mathfrak{F}$. We give an example of such map in Figure~\ref{f0}.

\begin{figure}[h!]
\begin{center}
\includegraphics[width=0.65\textwidth]{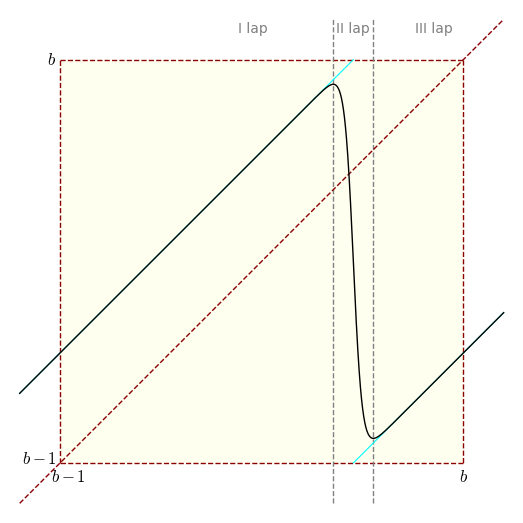}
\caption{The maps $F(x)=x+b-\frac{1}{e^{-90x}+1}$ (black) and $G$ (\textcolor{cyan}{cyan}) for $b=3/11$. The map $F$ belongs to $\mathfrak{F}$. The yellow region is the set $[b-1,b]^2$. % and
 % $a=110$. %Horizontal and vertical red lines are at the levels $b-1$, $b-2/3$, $b-1/3$ and $b$.}
  \label{f0}}
\end{center}
\end{figure}

\begin{remark}\label{r1-sym}
Observe that if we assume additionally symmetry \[g(-x)+g(x)=1,\] then 
\begin{itemize}
\item $y_-=-y_+$,
%\item in (A1) we can replace  $(-\infty,y_-)\cup (y_+,\infty)$ by
%  $(y_+,\infty)$,
%\item in (A3) we can replace $(y_-,y_+)$ by $[0,y_+)$,
\item (C1) and (C2) are equivalent,
\item (C3) and (C4) are equivalent.
\end{itemize}
\end{remark}

Let us also define a map $G\colon \R\to\R$ by
\begin{equation}\label{en2}
G(x)=\begin{cases}
x+b & \textrm{if}\ \ x<0,\\
x+b-1 & \textrm{if}\ \ x>0,
\end{cases}
\end{equation}
(we do not have to choose the value of $G$ at 0). In particular, we have
\begin{equation}\label{en3}
G(x)-F(x)=\begin{cases}
g(x) & \textrm{if}\ \ x<0,\\
g(x)-1 & \textrm{if}\ \ x>0.
\end{cases}
\end{equation}

\begin{remark}
For $G$ the interval $[b-1,b]$ is invariant, and if we glue together
the endpoints of this interval, $G$ will be the rotation of the
resulting circle by $b$ (that is, the angle $2\pi b$). Thus, if
$b=k/n$, $k,n$ coprime, then for any $x\in [b-1,b]$
\begin{equation}\label{Gi1n}
|G^i(x)-x|\ge\frac 1n
\end{equation}
for $i=1,\ldots,n-1$, and $G^n(x)=x$.
\end{remark}

Throughout the paper we set $b=k/n$, where $k,n$ are coprime natural numbers and $k<n$.

\section{Rotation-like behavior}\label{Section3}

Under assumptions of the preceding section, we will show the following lemma.

\begin{lemma}\label{le}
For every $F\in \mathfrak{F}$ there exists an attracting periodic orbit of $F$ of period $n$, that
lies in first and third lap. Both critical points of $F$ are in the
immediate basin of attraction of this periodic orbit.
\end{lemma}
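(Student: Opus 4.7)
The plan is to shadow the rotation $G$ of~\eqref{en2} along the orbits of the critical points, extract an attracting period-$n$ orbit via the intermediate value theorem, and deduce the basin statement from the negativity of the Schwarzian derivative together with Singer's theorem. Recall that on the circle obtained by identifying the endpoints of $[b-1,b]$, the map $G$ is rotation by $b=k/n$, so every $G$-orbit has exact period $n$ and consists of $n$ equispaced points at distance $1/n$.

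The main technical step is an inductive control of the deviation $e_i := F^i(y_+)-G^i(y_+)$. A direct computation gives $e_1=1-g(y_+)$, which by the two inequalities in~(C3) lies in $\bigl((n-1)\varepsilon,\ \frac{1}{2n}-(n-1)\varepsilon-y_+\bigr)$; in particular $F(y_+)<-\frac{1}{2n}$. For $i\ge 2$, as long as $F^{i-1}(y_+)$ lies outside the danger band $(-\frac{1}{2n},\frac{1}{2n})$, conditions~(C1)--(C2) force $|e_i-e_{i-1}|<\varepsilon$, with the sign of the change fixed by which safe half $F^{i-1}(y_+)$ occupies. Closing the induction---that $F^i(y_+)$ never leaves the safe region for any $i=1,\ldots,n-1$---rests on the geometric observation that $\{G^i(y_+)\}_{i=1}^{n-1}$ is $1/n$-equispaced on the circle and misses the danger arc by at least $\frac{1}{2n}-y_+$; by~(C3) this buffer strictly dominates the worst-case cumulative error $(n-1)\varepsilon$. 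The outcome is the bracketing
\[
F^n(y_+)\in\bigl(y_+,\ \tfrac{1}{2n}\bigr),
\]
and a symmetric argument based on~(C4) yields $F^n(y_-)\in(-\tfrac{1}{2n},y_-)$.

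Continuity of $F^n$ on $[b-1,b]$ together with $F^n(y_+)>y_+$ and $F^n(b)<b$ gives, by IVT, a fixed point $p\in(y_+,b)$ of $F^n$; since $F'(y_+)=0$ forces $(F^n)'(y_+)=0$, the function $F^n-\mathrm{id}$ has slope $-1$ at $y_+$, so taking the smallest such root places $p$ close to $y_+$. Rerunning the shadowing with $p$ as starting point (exploiting that by~(A) we have $0<F'<1$ on the laps $L_1,L_3$, so $F$ is a contraction there) shows the orbit $\{F^i(p)\}_{i=0}^{n-1}$ also stays outside the middle lap $(y_-,y_+)$ and consists of $n$ distinct points, establishing that the period is exactly $n$ and the orbit lies in $L_1\cup L_3$. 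Since $F'\in(0,1)$ at every orbit point,
\[
(F^n)'(p)=\prod_{i=0}^{n-1}F'(F^i(p))\in(0,1),
\]
so $p$ is attracting. For the basin claim I invoke Singer's theorem for $C^3$ maps with $SF<0$ (applied to $F^n$, whose Schwarzian is still negative): the immediate basin of $p$ under $F^n$ extends in each direction until it meets a critical point of $F^n$ or an endpoint of $[b-1,b]$. Both $y_+$ and $y_-$ are critical points of $F^n$ (as $F'$ vanishes there), and the shadowing estimate, applied in parallel to both critical points, shows their forward $F^n$-orbits stay close to and converge to the orbit of $p$; hence both $y_+$ and $y_-$ sit in the immediate basin.

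The main obstacle throughout is the inductive bookkeeping in the shadowing estimate---conditions~(C3)--(C4) are precisely calibrated so that the geometric buffer between the rotation orbit and the danger arc strictly dominates the accumulated error $(n-1)\varepsilon$---and the delicate transfer of this estimate from $y_+$ to the nearby fixed point $p$ of $F^n$, needed both to place $p$'s full orbit inside $L_1\cup L_3$ and to rule out rival attracting orbits that would split the two critical points between disjoint basins.
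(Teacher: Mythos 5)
Your proposal correctly identifies the shadowing idea (bounding $F^i(y_+)-G^i(y_+)$ by a cumulative error $< (n-1)\varepsilon$ plus the one large correction $1-g(y_+)$, with (C3)/(C4) calibrated so that the orbit of the critical point stays in the ``safe'' part of each basic interval). The conclusion $F^n(y_+)>y_+$ is established essentially as in the paper. However, the remainder of the argument has a genuine gap: you never identify the other endpoint of the lap of $F^n$ you are working on, and as a result you cannot control where the fixed point $p$ actually is, nor where its orbit goes.

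The paper's proof pins this down by constructing a second point $v$ so that $[u,v]$ with $u=y_+$ is literally a lap of $F^n$ contained in $[0,1/n]$: one follows the orbit of $y_+$ until it reaches the basic interval $[-1/n,0]$ containing $y_-$, and then pulls $y_-$ back through the increasing laps to define $v$ with $F^{i_0}(v)=y_-$. Because $G^{-1}-F^{-1}$ satisfies the same exponentially-small estimates as $F-G$, the shadowing runs both backward and forward, giving that all $F^j([u,v])$, $j=0,\dots,n$, sit inside basic intervals on the first and third laps. Only with this in hand does IVT produce a fixed point $z\in(u,v)$ whose whole orbit is guaranteed to lie in $L_1\cup L_3$; and since $u=y_+$ and $F^{i_0}(v)=y_-$, both critical points lie in the forward-invariant interval $[u,v]$ on which $F^n$ is a contraction, hence in the immediate basin of $z$ --- no appeal to Singer's theorem is needed.

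In your version, the IVT is applied on $(y_+,b)$ using the inequality $F^n(b)<b$, which is asserted but not established (only $F(b)<b$ is known, and a priori some $F^j(b)$ could land in the decreasing lap). Even granting a fixed point $p$, the phrase ``taking the smallest such root places $p$ close to $y_+$'' carries no quantitative content, and ``rerunning the shadowing with $p$ as starting point'' is circular: the shadowing estimate requires knowing the position of the starting point to within $\varepsilon$-precision relative to the grid $\{i/n\}$, and you have no such control on $p$ until after you have bounded its orbit --- which is exactly what you are trying to prove. The claim that the orbit of $p$ has $n$ distinct points (i.e.\ exact period $n$, not a divisor) is likewise asserted without support. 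Finally, Singer's theorem only guarantees that the immediate basin of each attracting cycle contains \emph{some} critical point or endpoint; it does not give both $y_+$ and $y_-$ in the same basin, and your supplementary argument (``their forward $F^n$-orbits stay close to and converge to the orbit of $p$'') is again the shadowing estimate, which controls proximity to the rotation orbit but by itself says nothing about convergence to $z$. The clean way to close all these gaps is precisely the backward construction of $v$ that the paper uses.
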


\begin{proof}
Call a subinterval of $[b-1,b]$ \emph{basic} if it is of the form
$[i/n,(i+1)/n]$ for an integer $i$. Clearly, $G$ maps a basic interval
to a basic interval.

We will show that in the basic interval
$[0,1/n]$ there is a lap $[u,v]$ of $F^n$, in which there is an
attracting fixed point of $F^n$. Moreover, the images of this lap
under $F^i$ ($i=0,1,\dots,n-1$) are contained in basic intervals and
$F$ is increasing on them. Remember that we know the dynamics of $G$
and by (C1) and (C2), $F$ is very close to $G$ except in a small
neighborhood of $0$. We will be thinking of
$F-G$ as a ``correction'' that we add to $G$ (which is simple) to get
$F$. The correction closer to the critical points (but on the
increasing laps) is taken care by (C3) and (C4).

Set $u=y_+$. By~(C3), $u\in(0,\frac1{2n})$. We have
$F(y_+)-(b-1)=1-g(y_+)+y_+$ so $F(u)$ is in the basic interval
$[b-1,b-1+1/n]$, and its distance from the left endpoint of this
interval is $1-g(y_+)+y_+$. For several next iterates of $F$ the point
$F^i(u)$ is in some basic interval and, by conditions~(C), its distance from the left
endpoint of that interval is still less than $\frac1{2n}$.

This clearly continues until $F^i(u)$ gets to the basic interval
$[-1/n,0]$. However, we can continue it further, since $F^i(u)$ is
close to the left endpoint of this interval, and~(C2) works for
$|x|\ge\frac1{2n}$. Nevertheless, this moment is special, since in
$[-1/n,0]$ there is the critical point $y_-$ of $F$. In our
construction we should have $y_-=F^i(v)$. Thus, we should work
backward in time to recover $v$.

This is possible, because locally $G$ is of the form $G(z)-z+\alpha$
for some $\alpha$. Therefore,
\[
G^{-1}(x)-F^{-1}(x)=x-\alpha-F^{-1}(x)=F(F^{-1}(x))-G(F^{-1}(x)),
\]
and we can use (C1) and (C2). Clearly, we can also go forward from $F^i(v)$. By
the same arguments as before, and using (C3) and (C4), we see that the
sets $F^j([u,v])$, $j=0,1,\dots,n$, are all contained in basic
intervals, and in the first and third lap of $F$.

Since $G^n(x)=x$ for every $x$, we have $F^n(u)-u=F^n(u)-G^n(u)$, and
this number is equal to the sum of ``corrections'' along the
trajectory of $u$ of length $n$. One of those corrections is
$1-g(y_+)$, and the rest of them are less than $\eps$ in absolute
value. Therefore, by~(C3), $F^n(u)-u>0$. Similarly, when we estimate
$F^n(v)-v$, we get $F^n(v)-v<0$.

Thus, there is a fixed point $z$ of $F^n$ in $(u,v)$. We know that
$F'<1$, so the orbit of $z$ is attracting and both $u$ and $v$ (and
therefore of both critical points of $F$) are in the immediate basin
of attraction of the orbit of $z$.
\end{proof}

\begin{lemma}\label{lee}
Under the assumptions of the preceding lemma, the attracting periodic
orbit attracts trajectories of Lebesgue almost all points from $\R$.
\end{lemma}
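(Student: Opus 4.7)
The plan is to reduce the statement to the globally attracting invariant interval $[b-1,b]$ and then invoke the classical measure-theoretic theory of $C^3$ interval maps with negative Schwarzian derivative. By Section~\ref{sec-naa} the trajectory of every point of $\R$ eventually enters $[b-1,b]$, so it suffices to prove that Lebesgue-a.e.\ point of $[b-1,b]$ lies in the basin of the attracting period-$n$ orbit $\mathcal{O}$ produced by Lemma~\ref{le}. The restriction $F|_{[b-1,b]}$ is a $C^3$ map of a compact interval with negative Schwarzian and exactly two critical points $y_-$ and $y_+$, both of which lie in the immediate basin of $\mathcal{O}$.

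First I would show that $\mathcal{O}$ is the \emph{only} attracting periodic orbit of $F$. By Singer's theorem (see, e.g., de Melo--van Strien, \emph{One-Dimensional Dynamics}, Chapter~II), every attracting periodic orbit of a map with negative Schwarzian must contain in its immediate basin either a critical point or the forward orbit of an endpoint of the domain. The boundary possibility is ruled out because $F(b-1)>b-1$ and $F(b)<b$, so the endpoint trajectories enter the open interval and are eventually absorbed by $\mathcal{O}$ through the critical points. Since $y_-$ and $y_+$ already belong to the immediate basin of $\mathcal{O}$ by Lemma~\ref{le}, no other attracting periodic orbit can exist.

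Next I would combine two further classical consequences of negative Schwarzian: the Martens--de Melo--van Strien theorem on the non-existence of wandering intervals, and the Blokh--Lyubich dichotomy (see again de Melo--van Strien, Chapter~V). Because the $\omega$-limit set of each critical point of $F$ is the hyperbolic periodic attractor $\mathcal{O}$, the Blokh--Lyubich result implies that Lebesgue-a.e.\ point of $[b-1,b]$ is attracted to a periodic attractor; by the uniqueness established in the previous step that periodic attractor must be $\mathcal{O}$. Pulling back by finitely many iterates extends the conclusion from $[b-1,b]$ to $\R$.

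The hard part is not proving a new dynamical estimate but checking that the hypotheses of the Blokh--Lyubich machinery really do apply to our two-critical-point setting: non-flatness of $y_\pm$ (which follows from $g\in C^3$ together with the sign information in~(A), so that $g''(y_\pm)\neq 0$ and hence $F''(y_\pm)\neq 0$) and hyperbolicity of $\mathcal{O}$ (implicit in Lemma~\ref{le}, where $|F'|<1$ along the relevant laps is used to conclude attraction). Once these routine verifications are in place, the full-measure conclusion follows immediately.
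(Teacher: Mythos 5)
Your route---reduce to an invariant compact interval and invoke the standard negative-Schwarzian toolkit (Singer's theorem, absence of wandering intervals, the Blokh--Lyubich/Milnor attractor dichotomy)---is a legitimate modern path to the statement, and it is genuinely different from the paper's. The paper does not appeal to Singer or Blokh--Lyubich; it instead follows Misiurewicz's older argument (Theorem~4.1 of~\cite{M2}, built on Theorems~1.2 and~1.3 of~\cite{M1}), which uses homtervals and an explicit expansion estimate on the complement of the immediate basin to force almost every orbit into that basin. Your approach trades that hands-on expansion argument for off-the-shelf classification theorems; both are reasonable, and yours arguably requires fewer moving parts to state.

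However, there is a genuine gap in your application of Singer's theorem, and it comes from the choice of domain. You restrict to $[b-1,b]$, where the endpoints are \emph{not} fixed: $F(b-1)>b-1$ and $F(b)<b$. Singer's theorem on a compact interval only guarantees that the immediate basin of an attracting cycle contains a critical point \emph{or a boundary point of the domain}. Since both critical points already lie in the basin of $\mathcal{O}$, excluding a second attracting cycle requires knowing what the forward orbits of $b-1$ and $b$ do. Your sentence that the endpoint trajectories ``are eventually absorbed by $\mathcal{O}$ through the critical points'' is precisely the thing that needs to be proved; it does not follow from $F(b-1)>b-1$ and $F(b)<b$. A priori the orbit of $b-1$ (say) could converge to a second attracting cycle whose immediate basin is disjoint from those of $y_\pm$, which is exactly the scenario Singer's theorem leaves open when a boundary point is available. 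The paper sidesteps this by passing to the two-point compactification of $\R$ conjugated to $[0,1]$: there the boundary points are \emph{repelling fixed points}, so they cannot lie in the immediate basin of any attracting cycle, and the boundary alternative never arises. If you carry out your argument on that compactified domain instead of on $[b-1,b]$ (with the caveat the paper flags about transporting the negative-Schwarzian hypothesis and Lebesgue measure through the smooth conjugacy), the Singer step closes and the rest of your argument goes through. As a minor additional point, condition~(A) gives only one-sided sign information on $g'-1$ around $y_\pm$, hence only $g''(y_+)\le 0$ and $g''(y_-)\ge 0$; it does not by itself yield $g''(y_\pm)\neq 0$ as you assert. Non-flatness of the critical points (which is all the wandering-interval and attractor theorems need) should be argued separately---for the EOS family it is immediate from real-analyticity.
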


\begin{proof}
Let $z$ be the attracting periodic orbit of $F$ of period $n$ from Lemma \ref{le}.
The map $F$ has negative Schwarzian derivative by the assumption.
Moreover, we can compactify $\R$ by adding points at plus and minus
infinity, and those points are repelling fixed points. It turns out
that from this and from what we already proved it follows that the
trajectories of Lebesgue almost all points of $\R$ are attracted to
the orbit of $z$.

The proof of this fact is almost identical to the proof of Theorem~4.1
of~\cite{M2} (whose main part is taken from the proof of Theorem~1.3
of~\cite{M1}). The reader can find the details there. Here we will
only sketch the main idea of the proof.

Since the proofs we mentioned are for maps of a bounded interval into
itself, we will work with the map $f:[0,1]\to[0,1]$, smoothly
conjugate to $F$. % (for instance, by $x\mapsto e^x/(1+e^x)$). 
Then the basic properties of $F$ and $f$ are the same. Moreover, the points $0$
and $1$ are repelling fixed points of $f$. We will denote by $Z$ the
periodic orbit of $f$ corresponding to the orbit of $z$ for $F$.

Let $U$ be the union of the immediate basin of attraction of $Z$ and a
small neighborhood of $\{0,1\}$. Then we define $W$ as a small open
neighborhood of $[0,1]\setminus U$, so that its closure is disjoint
from $Z$ and does not contain the critical points of $f$.

An interval $J\subset[0,1]$ is called a \emph{homterval} if for every
$n>0$ $f^n$ restricted to $J$ is a homeomorphism onto its image. Then
we can use Theorem~1.2 of~\cite{M1} to deduce that for every homterval
$J$ there exists $m\ge 0$ such that the closure of $f^m(J)$ is
contained in $U$. Now we can use Theorem~1.3 of~\cite{M1}. Note that
the assumption that $f$ has no sinks really means that there are no
sinks in $W$, and the assumption on the negative Schwarzian derivative
can be replaced by the assumption that $f$ is smoothly conjugate to a
map with negative Schwarzian derivative (the Lebesgue measure
$\lambda$ can be replaced by the measure $\mu$ equivalent to the
Lebesgue measure, transported via the conjugacy).

Thus, we see that on $W$ some iterate of $f$ is expanding (perhaps for
$\mu$, not $\lambda$). Now, the standard argument, as in the proof of
Theorem~4.1 of~\cite{M2} shows that the trajectory of $\mu$-almost
every point (and therefore of $\lambda$-almost every point) sooner or
later enters $U$. Then such trajectory converges to $Z$.
\end{proof}

An immediate consequence of Lemma \ref{le} and Lemma \ref{lee} is the following theorem.
\begin{theorem} \label{thmpern}
If $b=k/n$, where $k,n\in\N$ are coprime, then for every $F\in\mathfrak{F}$, there exists an attracting periodic orbit of $F$ of period $n$, which attracts trajectories of Lebesgue almost all points from $\R$. This attracting periodic orbit   lies in first and third lap, in the invariant interval $[b-1,b]$, and  it behaves like an orbit of $G$. That is, if we glue $b-1$ with $b$ to get a circle, the order of the points on the orbit is the same as for the rotation by $b$.
\end{theorem}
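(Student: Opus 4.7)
The plan is to deduce Theorem~\ref{thmpern} from Lemmas~\ref{le} and~\ref{lee}, adding only a short combinatorial remark to identify the cyclic order of the surviving orbit with that of the rotation by $b=k/n$.

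Lemma~\ref{le} is almost the whole story: applied to $F\in\mathfrak{F}$ it yields an interval $[u,v]\subset[0,1/n]$ with $u=y_+$ and an attracting fixed point $z$ of $F^n$ in $(u,v)$, such that each iterate $F^j([u,v])$, $j=0,\ldots,n-1$, lies in a basic interval on which $F$ is increasing. This directly gives an attracting period-$n$ orbit of $F$ inside $[b-1,b]$ that lies entirely in the first and third laps, with both critical points in its immediate basin of attraction. Lemma~\ref{lee} then supplies the remaining global statement: the orbit of $z$ attracts the trajectory of Lebesgue-almost every point of $\R$.

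For the rotation-like cyclic order, let $I_j$ denote the basic interval containing $F^j(z)$. The proof of Lemma~\ref{le} shows that each $F^j(z)$ stays within distance $\tfrac{1}{2n}$ of the left endpoint of $I_j$, because the ``correction'' $F-G$ accumulated along the orbit of length $n$ is bounded by $(n-1)\varepsilon$, and this is kept below $\tfrac{1}{n}$ by the quantitative inequalities in condition~(C). Consequently $F^j(z)$ lies in the \emph{same} basic interval as $G^j(z)$ for each $j=0,\ldots,n-1$. Since $\gcd(k,n)=1$, the $n$ points $G^j(z)$ visit each of the $n$ basic intervals exactly once, in the cyclic order of the rotation by $k/n$; after identifying $b-1$ with $b$ to form a circle, this cyclic order is inherited by $\{F^j(z)\}$, which is precisely the final assertion of the theorem.

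The only real hurdle is controlling the drift between basic intervals across $n$ iterates, but this drift is exactly what the inequalities (C1)--(C4) quantify, and the bookkeeping is already carried out inside the proof of Lemma~\ref{le}; no new estimate is needed here.
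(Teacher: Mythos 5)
Your proposal is correct and matches the paper's approach: the paper explicitly presents Theorem~\ref{thmpern} as ``an immediate consequence of Lemma~\ref{le} and Lemma~\ref{lee}'' with no further argument. Your added combinatorial remark --- that each $F^j(z)$ stays in the same basic interval as $G^j(z)$ by the $(n-1)\varepsilon$ drift bound from condition~(C), so the cyclic order matches the rotation by $k/n$ --- simply makes explicit what is already contained in the proof of Lemma~\ref{le}.
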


\section{EOS maps}\label{seceos}

Now we will focus on the concrete family of maps from $\mathfrak{F}$.
Let us consider the family of EOS maps $F\colon \R\to\R$, given by
\begin{equation}\label{e2}
F(x)=x+b-\frac1{e^{-ax}+1},
\end{equation}
parametrized by two parameters $a>0$ and $b\in (0,1)$. A family 
of maps equivalent to \eqref{e2} was first observed in \cite{eirola1996chaotic}.
This family describes population dynamics in mathematical biology:
%%This family arises from problems in mathematical biology: %(two predator-one-prey model) \cite{eirola1996chaotic,kryzhevich2021bistability}:
in \cite{kryzhevich2021bistability} a one-dimensional version of two-predator-one-prey model is introduced by a family of maps $f_{PP}\colon \R \to \R$ given by
    \[f_{PP}(x)=x+B-\frac{k}{1+e^x}.\]
    By taking $x=kz$ we get
\[f_{PP}(kz)=k\left(z+\frac Bk-\frac{1}{1+e^{kz}}\right)=k\left(z+b-\frac{1}{e^{-az}+1}\right)=kF(z)\] for $a=-k$ and $b=\frac Bk$, so $f_{PP}$ is conjugate to $F$.

Moreover, it is also conjugate to the map from machine learning %artificial intelligence 
and game theory. That is, %\cite{chotibut2021family}: 
the evolution in a simple population game where agents are using multiplicative weights algorithm, established algorithm from machine learning and economics \cite{Arora05themultiplicative},  is described by the family of maps
$f_{MW}\colon [0,1]\to [0,1]$, given by
\begin{equation}\label{e2f}
f_{MW}(y)=\frac{y}{y+(1-y)\exp(a(y-b))},
\end{equation}
where $a>0$, $b\in (0,1)$ (for a thorough discussion, see \cite{bielawskiheterogeneity,chotibut2021family}). 
 By taking $x=\frac 1a \log\frac{y}{1-y}$ and \[F(x)=\frac 1a
\log \frac{f_{MW}(y)}{1-f_{MW}(y)}\] we see that the family $F$ is conjugate to $f_{MW}$ on $(0,1)$.

\begin{comment}
\footnote{
The evolution in a simple population game where agents are using multiplicative weights algorithm, established algorithm from machine learning and economics,  is described by the map
$f_{MW}\colon [0,1]\to [0,1]$, where
\begin{equation}\label{e2f}
f_{MW}(y)=\frac{y}{y+(1-y)\exp(a(y-b))},
\end{equation}
where $a>0$, $b\in (0,1)$ (for a thorough discussion, see \cite{chotibut2021family,bielawskiheterogeneity}). 
 By taking $x=\frac 1a \log\frac{y}{1-y}$ and \[F(x)=\frac 1a
\log \frac{f_{MW}(y)}{1-f_{MW}(y)}\] we see that the map $F$ is conjugate to $f_{MW}$ on $(0,1)$.} 
as well as mathematical biology (two
predator-one-prey model)
\cite{eirola1996chaotic,kryzhevich2021bistability}.\footnote{
    In \cite{kryzhevich2021bistability} a one-dimensional version of two-predator-one-prey model is introduced by a map $f_{PP}\colon \R \to \R$ given by
    \[f_{PP}(x)=x+B-\frac{k}{1+e^x}.\]
    By taking $x=kz$ we get
\[f_{PP}(kz)=k\left(z+\frac Bk-\frac{1}{1+e^{kz}}\right)=k\left(z+b-\frac{1}{e^{-az}+1}\right)=kF(z)\] for $a=-k$ and $b=\frac Bk$, so $f_{PP}$ is conjugate to $F$.

}
\end{comment}

For the family of EOS maps we have $g(x)=\frac1{e^{-ax}+1}$. Observe that
\[
g(-x)+g(x)=\frac{e^{-ax}}{e^{-ax}+1}+\frac1{e^{-ax}+1}=1,
\]
so, by Remark \ref{r1-sym}, the simplifications we mentioned apply.

Let us fix $b$ as in the preceding section. We will show for $a$ sufficiently large, $F$ satisfies our
assumptions.

%Let us make some calculations. We have
%\begin{equation}\label{e3}
%g'(x)=\frac{at}{(t+1)^2},\ \ \textrm{where}\ \ t=e^{-ax}.
%\end{equation}
%Therefore, $y_+$ and $y_-$ are given by $\xi(t)=0$, where
%$\xi(t)=(t+1)^2-at$. Denote the corresponding values of $t$ by
%$t_-,t_+$. They exist if $a>4$. Moreover, (A) follows from~\eqref{e3}.

For $x\ne 0$ we have by~\eqref{en3}
\begin{equation}\label{e5}
F(x)-G(x)=\frac{\sgn(x)}{e^{a|x|}+1},
\end{equation}
where $\sgn(x)$ is the sign of $x$. Therefore,
\begin{equation}\label{e6}
\textrm{if}\ \ \ |x|\ge\delta,\ \ \ \textrm{then}\ \ \ |F(x)-G(x)|\le
e^{-a\delta},
\end{equation}
which is exponentially small in $a$. Thus, we get (C1) for sufficiently
large $a$.

As $a$ goes to infinity, $g(b-1)$ goes to 0, and $g(b)$ goes to 1.
Therefore, \eqref{bb} holds for sufficiently large $a$.

Let us make some calculations. We have
%\begin{equation}\label{e3}
\[
g'(x)=\frac{at}{(t+1)^2},\ \ \textrm{where}\ \ t=e^{-ax}.
\]
%\end{equation}
Therefore, $y_+$ and $y_-$ are given by $\xi(t)=0$, where
$\xi(t)=(t+1)^2-at$. Denote the corresponding values of $t$ by
$t_-,t_+$. They exist if $a>4$. %Moreover, (A) follows from~\eqref{e3}.
%Let us make estimates for $y_-$ and $y_+$. They correspond to the
%roots of $\xi(t)=(t+1)^2-at$, that is $t_-$, $t_+$. 
We have $\xi(a-2)=1>0$ and $\xi(a-3)=4-a<0$. Therefore, $a-3<t_-<a-2$.
Since $y_-=-\frac1a\log t_-$ and $y_+=-y_-$, we get
\begin{equation}\label{e4}
\frac1a\log(a-3)<y_+<\frac1a\log(a-2).
\end{equation}
Therefore, for sufficiently large $a$, $y_+\in(0,b)$ and we get \eqref{aa}.

From~\eqref{en3} and~\eqref{e4} we
get
\begin{equation}\label{e7}
\frac1{a-1}<F(y_+)-G(y_+)<\frac1{a-2}.
\end{equation}
Therefore, if $\eps$ is sufficiently small and $a$ is sufficiently
large, we get (C3).

Finally, the fact that the Schwarzian derivative of $F$ is negative,
was proved in~\cite{bielawski2022memory}.
Thus, the family of EOS maps defined by \eqref{e2} belongs to $\mathfrak{F}$ for sufficiently large $a$. Therefore, Theorem~\ref{thmpern}
applies.

\begin{figure}[h!]
\begin{center}
\includegraphics[width=80truemm]{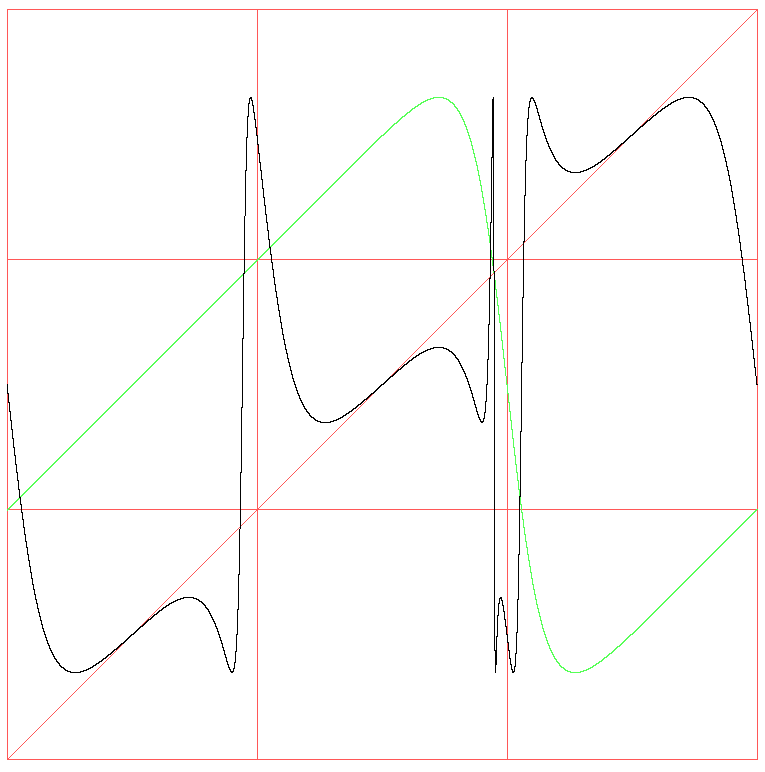}
\caption{The maps $F$ (green) and $F^3$ (black) defined by \eqref{e2} for $b=1/3$ and
  $a=40$. Horizontal and vertical red lines are at the levels $b-1$,
  $b-2/3$, $b-1/3$ and $b$.}\label{f1}
\end{center}
\end{figure}

\begin{remark}
Other families of maps, which fulfill assumptions of Theorem \ref{thmpern}  can be obtained for instance by taking $g(x)=\frac 12+\frac{1}{\pi}\arctan (ax)$ or $g(x)=\frac 12+\frac{1}{\sqrt{\pi}}\int_0^{ax}e^{-t^2}dt$ for sufficiently large $a$.
\end{remark}

\begin{comment}
In the remaining part of the paper we will focus on the dynamics of the map \eqref{e2} for different values of $a$.
First, our map $F$ has exactly one fixed point,
\begin{equation}\label{fpoint}
c=\frac1a\log\frac{b}{1-b}.
\end{equation}
 By the conjugacy argument and result from  \cite{chotibut2021family}, for $a\leq
\frac{2}{b(1-b)}$ the point $c$ is globally attracting. Thus, we will
assume that $b\in(0,1)$ and $a>\frac{2}{b(1-b)}$. Let us make some
calculations. We have
\begin{equation}\label{e3}
F'(x)=1-\frac{at}{(t+1)^2},\ \ \textrm{where}\ \ t=e^{-ax}.
\end{equation}
Therefore, the critical points of $F$ are given by $\xi(t)=0$, where
$\xi(t)=(t+1)^2-at$. Denote those critical points by $y_-,y_+$. They exist if $a>4$.    
\end{comment}

\section{Bifurcations for a rational $b$ for EOS maps}\label{secbif}

Now we describe other phenomena that happen when $b$ is rational and
$a$ increases for the family of EOS maps defined by \eqref{e2}. We make computations for $b=k/11$, where $k=1,2,3,4,5$.
Note that by the symmetry, the behavior for $b$ replaced by $1-b$ is
the same.

We know already that if $b=k/n$ in lowest terms, then for $a$
sufficiently large there exists an attracting periodic orbit of $F$ of
period $n$, that lives on the first and third lap, has $k$ points on
the third lap, and attracts Lebesgue almost all points.  In the proof of Theorem \ref{thmpern}
at an early stage the dependence on $k$ vanishes, but it is not clear
whether the estimates how large $a$ should be are really mostly
independent of $k$, or this is only a feature of the concrete proof.

Our numerical results suggest the former.
Figure \ref{fig-bif} shows the bifurcation diagram for
$b=k/11$, where $k=1,2,3,4,5$. On the horizontal axis there is $a$,
from $100$ to $180$; the vertical cyan lines are every $10$.
Horizontal red lines indicate the positions of the critical points.
When the attracting periodic orbit of period $11$ is born, it has two
points in the second lap. Then, as $a$ increases, those points move to
the first and third lap, and the situation described in %the theoretical part 
Section \ref{Section3} is created. It seems that they move through the
critical points in more or less the same time, independently of $k$. 
% In fact, {\bf the pictures suggest that the important bifurcation values of $a$ do not depend on $k$.}

\begin{figure}[h]
\begin{subfigure}{.48\textwidth}
  \includegraphics[width=1.0\textwidth]{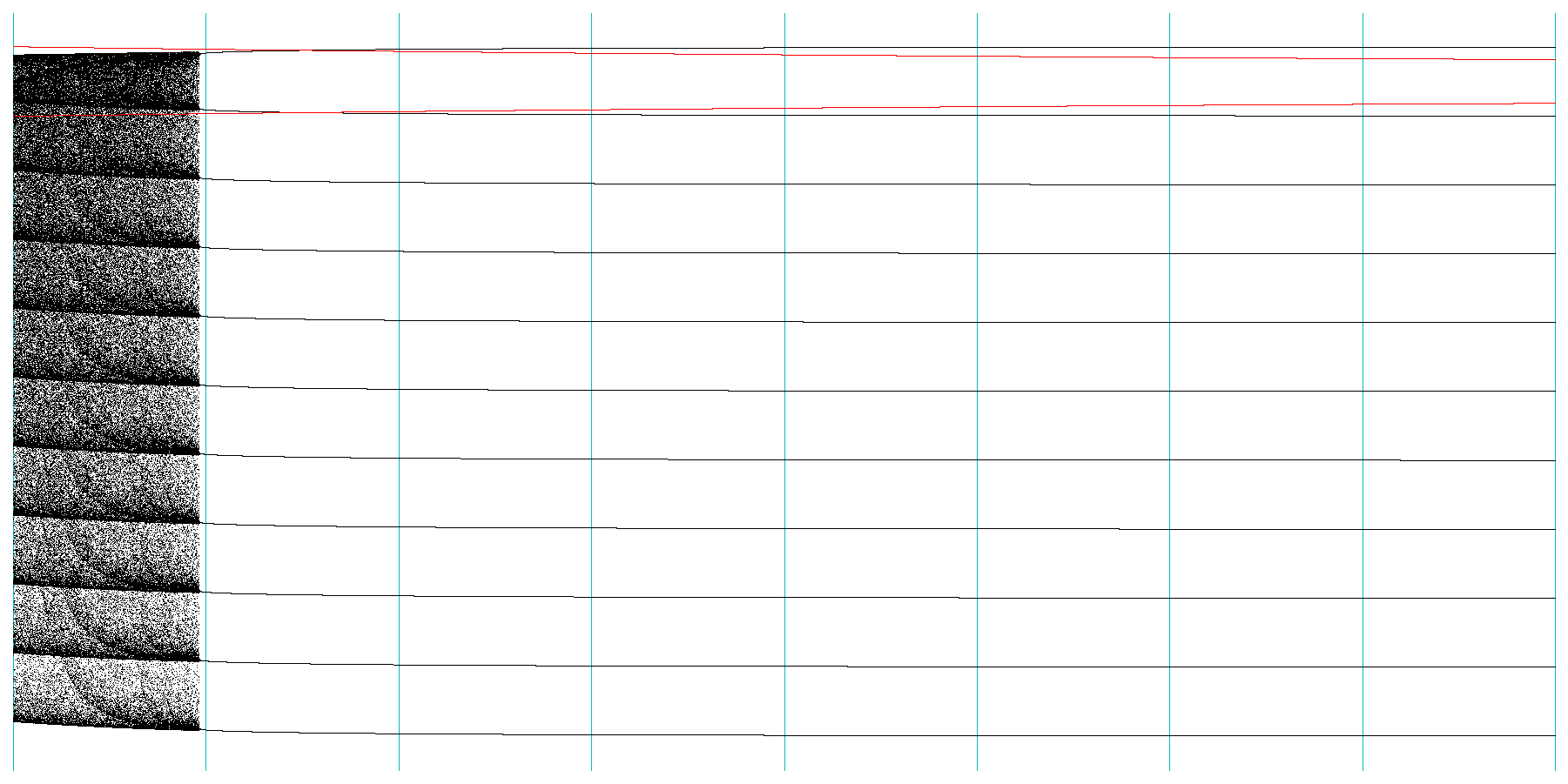}
\end{subfigure}%
\hfill
\begin{subfigure}{.48\textwidth}
\includegraphics[width=1.0\textwidth]{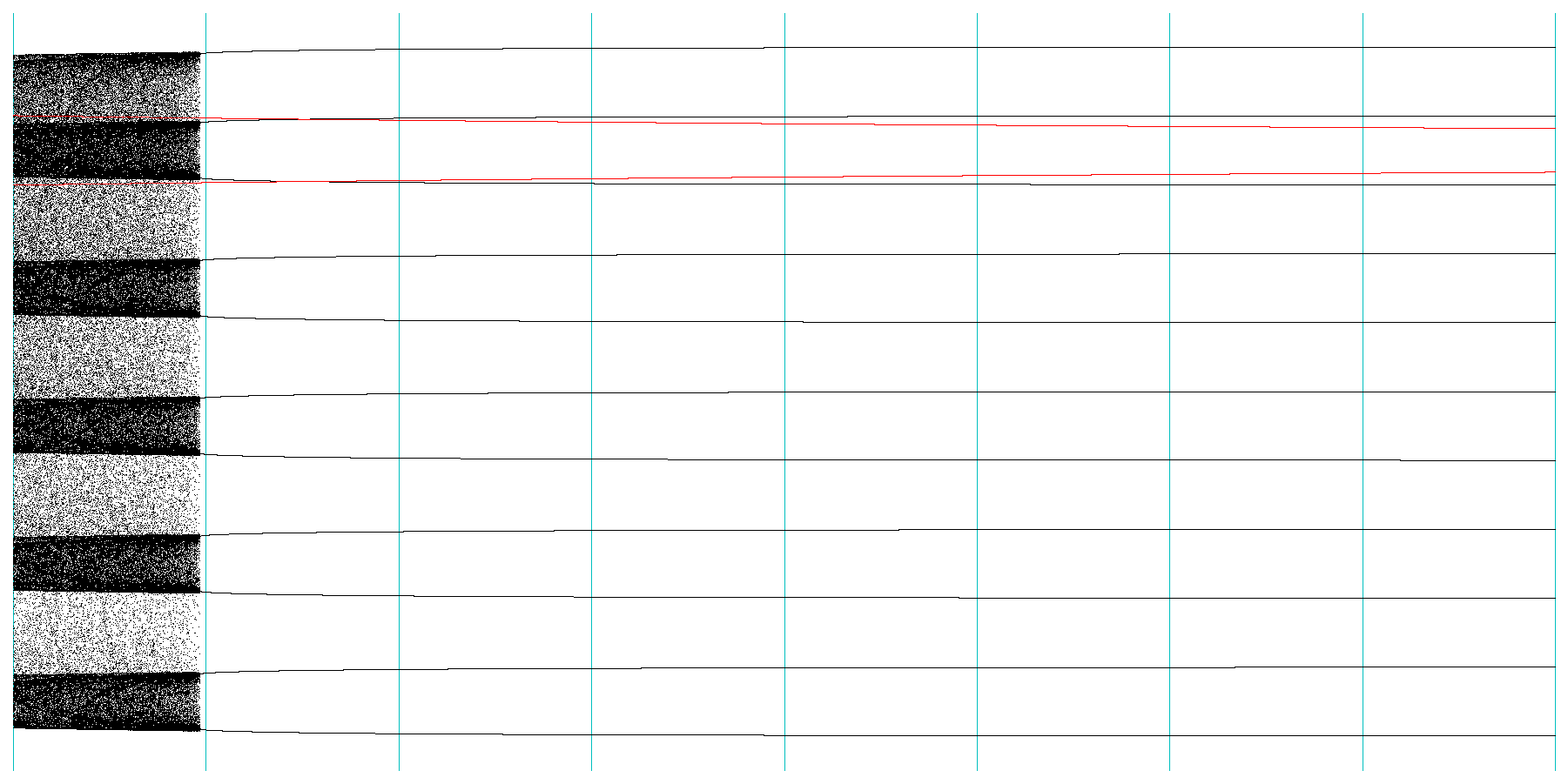}
\end{subfigure}
\begin{subfigure}{.48\textwidth}
  \includegraphics[width=1.0\textwidth]{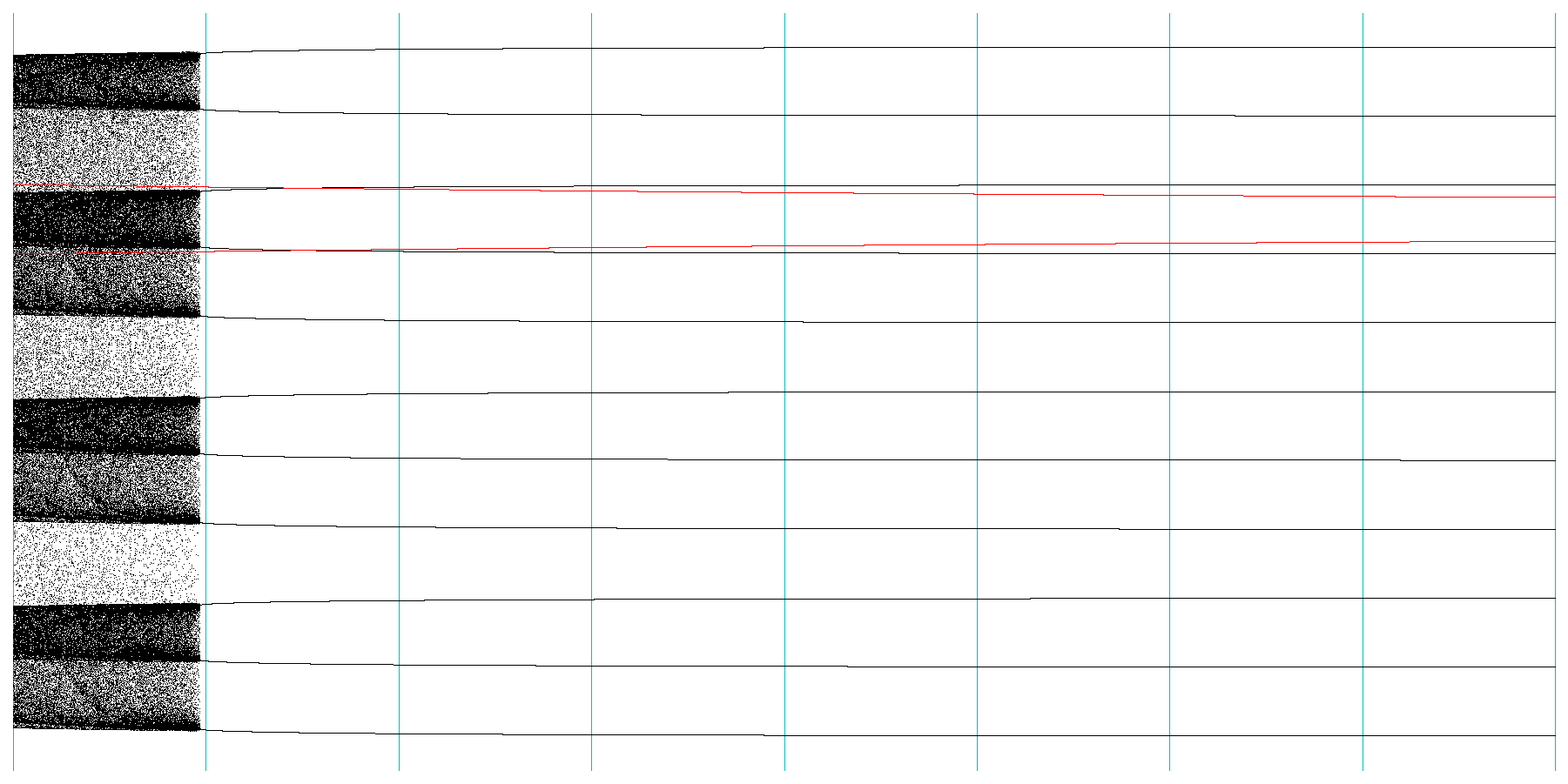}
\end{subfigure}%
\hfill
\begin{subfigure}{.48\textwidth}
  \includegraphics[width=1.0\textwidth]{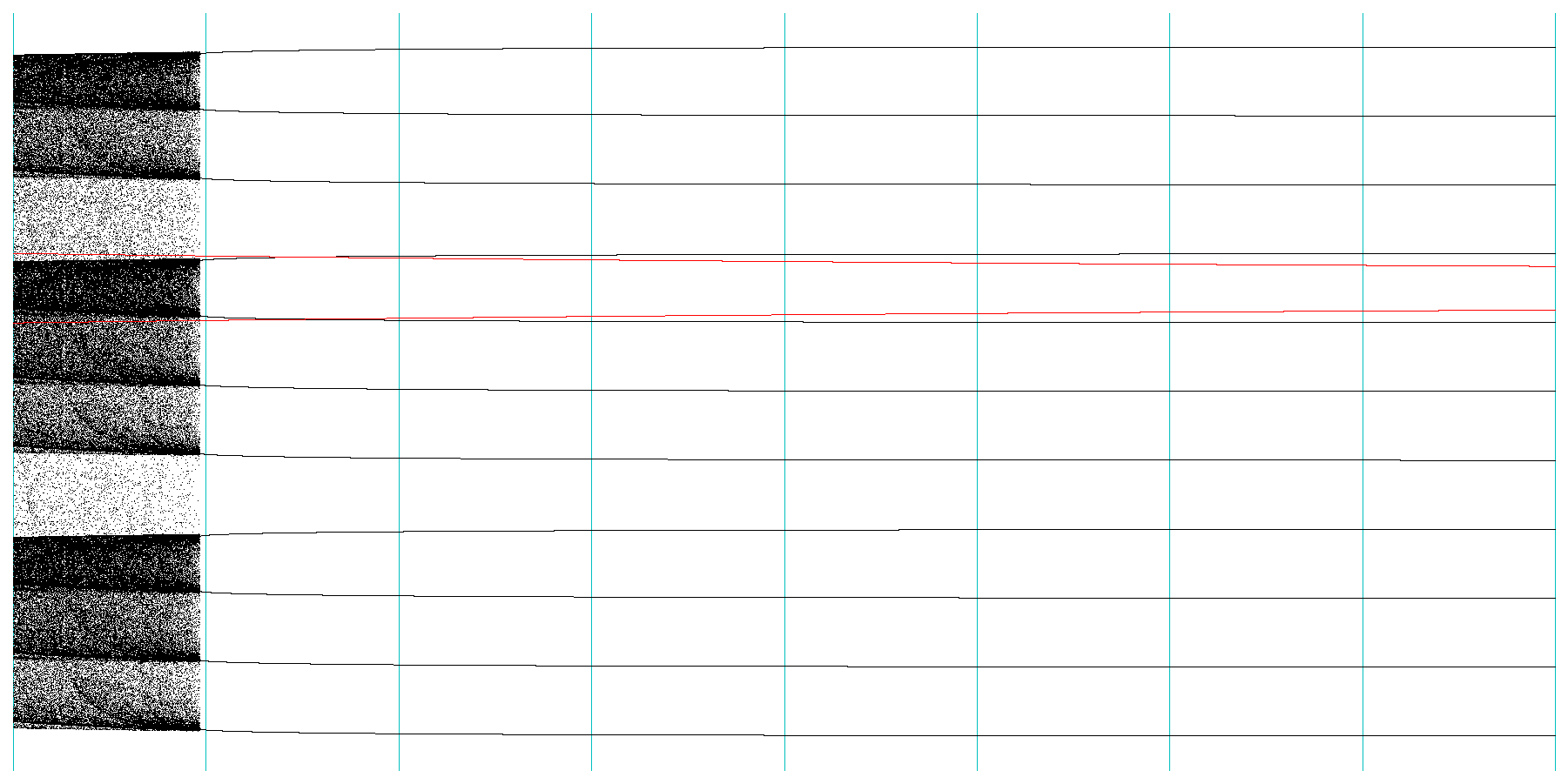}
\end{subfigure}
\hfill
\begin{subfigure}{.48\textwidth}
  \includegraphics[width=1.0\textwidth]{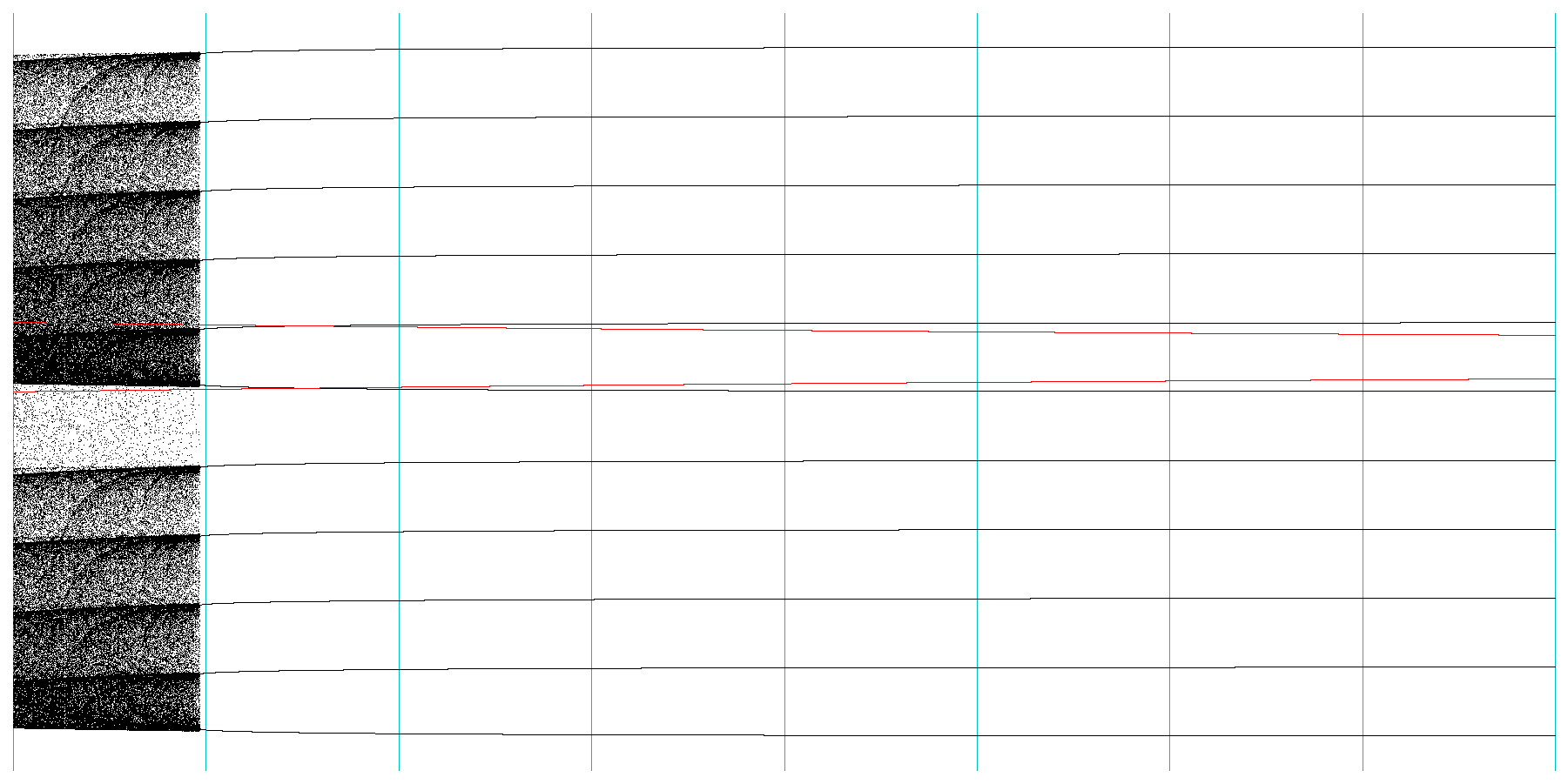}
\end{subfigure}%
\hfill
\begin{subfigure}{.48\textwidth}
  
\end{subfigure}%
%\begin{minipage}{.5\textwidth}
% 
%\end{minipage}%
 \caption{Bifurcation diagrams for EOS maps
$b=k/11$, where $k=1,2,3,4,5$. On the horizontal axis there is $a$,
from $100$ to $180$; the vertical cyan lines are every $10$.
Horizontal red lines indicate the positions of the critical points.}
\label{fig-bif}

\end{figure}

\begin{comment}
\begin{figure}[h!]
\centering
    \begin{subfigure}{1.0\textwidth}
\begin{center}
\includegraphics[width=0.77\textwidth]{bbbb1over11.jpg}
\caption{ $b=1/11$.}\label{bbbb1}
\end{center}
\end{subfigure}
\begin{subfigure}{1.0\textwidth}
\begin{center}
\includegraphics[width=0.77\textwidth]{bbbb2over11.jpg}
\caption{ $b=2/11$.}\label{bbbb2}
\end{center}
\end{subfigure}
\begin{subfigure}{1.0\textwidth}
\begin{center}
\includegraphics[width=0.77\textwidth]{bbbb3over11.jpg}
\caption{ $b=3/11$.}\label{bbbb3}
\end{center}
\end{subfigure}
\begin{subfigure}{1.0\textwidth}
\begin{center}
\includegraphics[width=0.75\textwidth]{bbbb4over11.jpg}
\caption{ $b=4/11$.}\label{bbbb4}
\end{center}
\end{subfigure}
\begin{subfigure}{1.0\textwidth}
\begin{center}
\includegraphics[width=0.75\textwidth]{bbbb5over11.jpg}
\caption{ $b=5/11$.}\label{bbbb5}
\end{center}
\end{subfigure}
\caption{Bifurcation diagrams for
$b=k/11$, where $b=1,2,3,4,5$. On the horizontal axis there is $a$,
from $109$ to $118$; the vertical cyan lines are every $0.5$.
Horizontal green lines indicate the positions of the critical points.}
\end{figure}
\end{comment}

This independence of $k$ (as well as other aspects, %of this independence, 
like the position of certain points of the attracting
periodic orbit) can be explained in the following way.  Of course, it
is not a complete independence, but approximate one. %{\cre A rigorous proof would have to include estimates.}

\vspace{0.5cm}

{\bf Approximate independence of important bifurcations of the numerator.} Let us now take $b=k/n$, where $k<n$ are coprime positive integers and assume that $n>3$. %(or maybe even $n>5$).
Since we want to vary $k$, it is reasonable to assume that $n$ is a prime
number. As
\[
-F(-x)=x+(1-b)-\frac1{e^{-ax}+1},
\]
it is enough to look at $k<n/2$. Then we take a reasonably large $a$.
For the sake of illustration, we choose $n=11$ and $a=110$.

Now we replace $F$ by a family of maps $\wf$, defined as $F$ on $[-1/n,1/n]$ and
$G$ on the rest of the real line. By~\eqref{e6}, it is very close to
$F$ (indistinguishable on the pictures).

Let us divide the real line into intervals $I_j=[(j-1)/n,j/n]$ (we do
not care about endpoints). If $j\ne0,1$, then our family $\wf$ is a
translation on $I_j$ and maps it onto some $I_i$. The situation is
more difficult if $j$ is 0 or 1. Then we take a subinterval $J_j$ of
$I_j$ on which the image under $\wf$ is contained in the same $I_i$ as
the image under $G$. Next we take the smallest $r_j>0$ for which
$\wf^{r_j}(J_j)\subset I_{1-j}$. Because the family $\wf$ is just a
translation on each $I_i$ with $i\ne0,1$, the graph of $\wf^{r_j}$ on
$J_j$ is just a vertical translation (by a fraction with the
denominator $n$) of the graph of $F$ on the same interval.

In Figures~\ref{4a1},~\ref{4a3}, and~\ref{4a4}, we see the graphs of
$F$ on $[b-1,b]$ for $b=1/11,3/11,4/11$ respectively. Moreover, in
blue we see the graphs of $F^{r_j}$ on $J_j$, and in magenta the
graphs of $F^n$ on slightly smaller intervals ($K_j$).

\begin{figure}[h!]
\centering
\begin{subfigure}{.45\textwidth}
\begin{center}
\includegraphics[width=.9\linewidth]{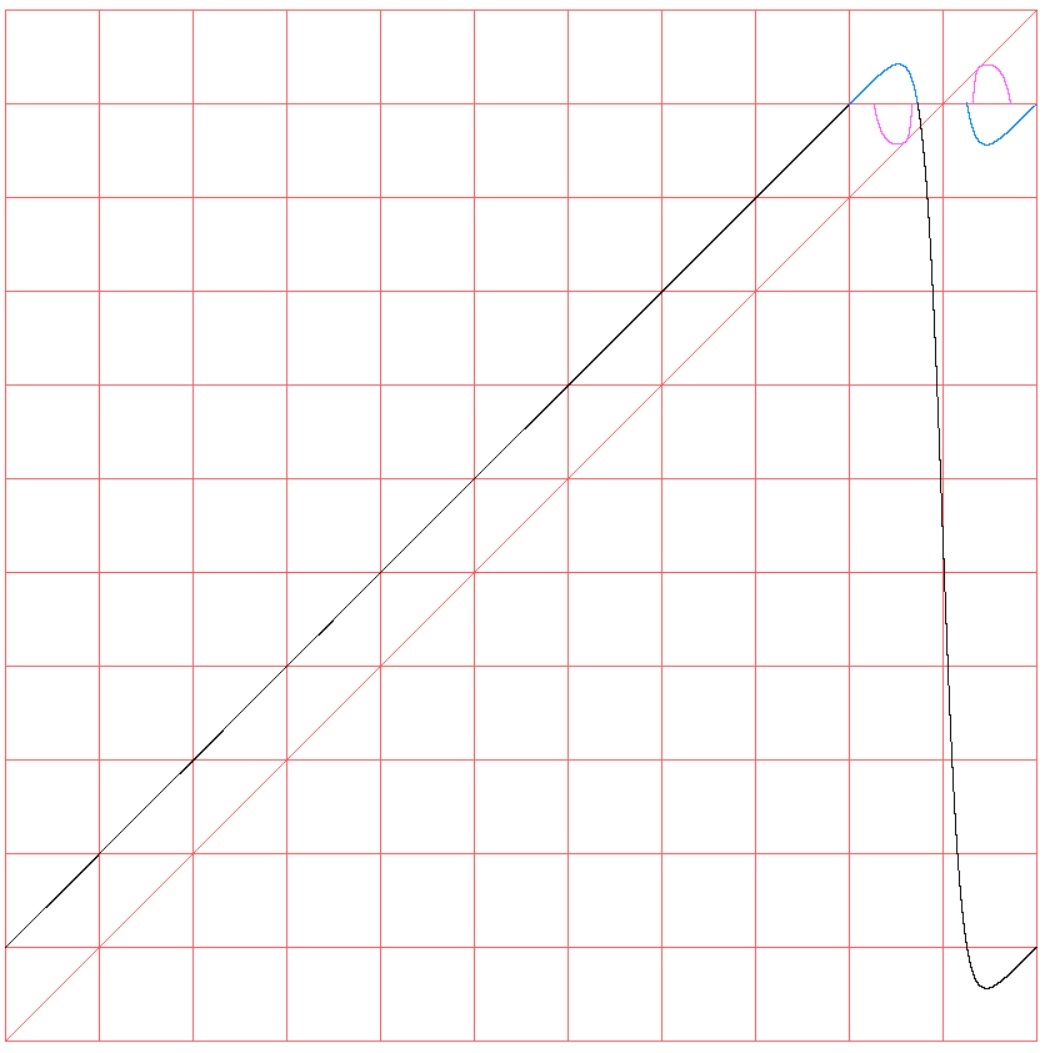}
\caption{Here $b=1/11$.}\label{4a1}
\end{center}
\end{subfigure}
\begin{subfigure}{.45\textwidth}
\begin{center}
\includegraphics[width=.9\linewidth]{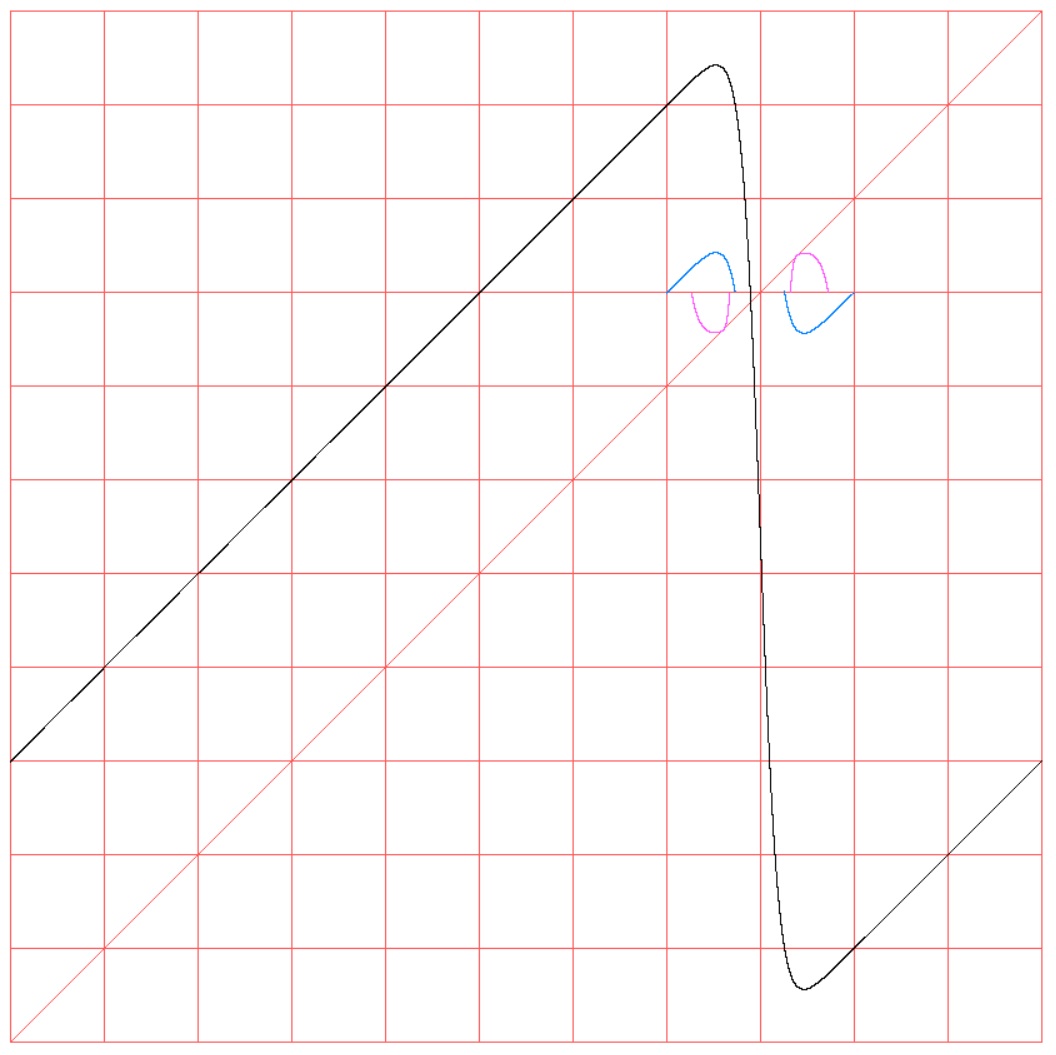}
\caption{Here $b=3/11$.}\label{4a3}
\end{center}
\end{subfigure}
\begin{subfigure}{.45\textwidth}
\begin{center}
\includegraphics[width=.9\linewidth]{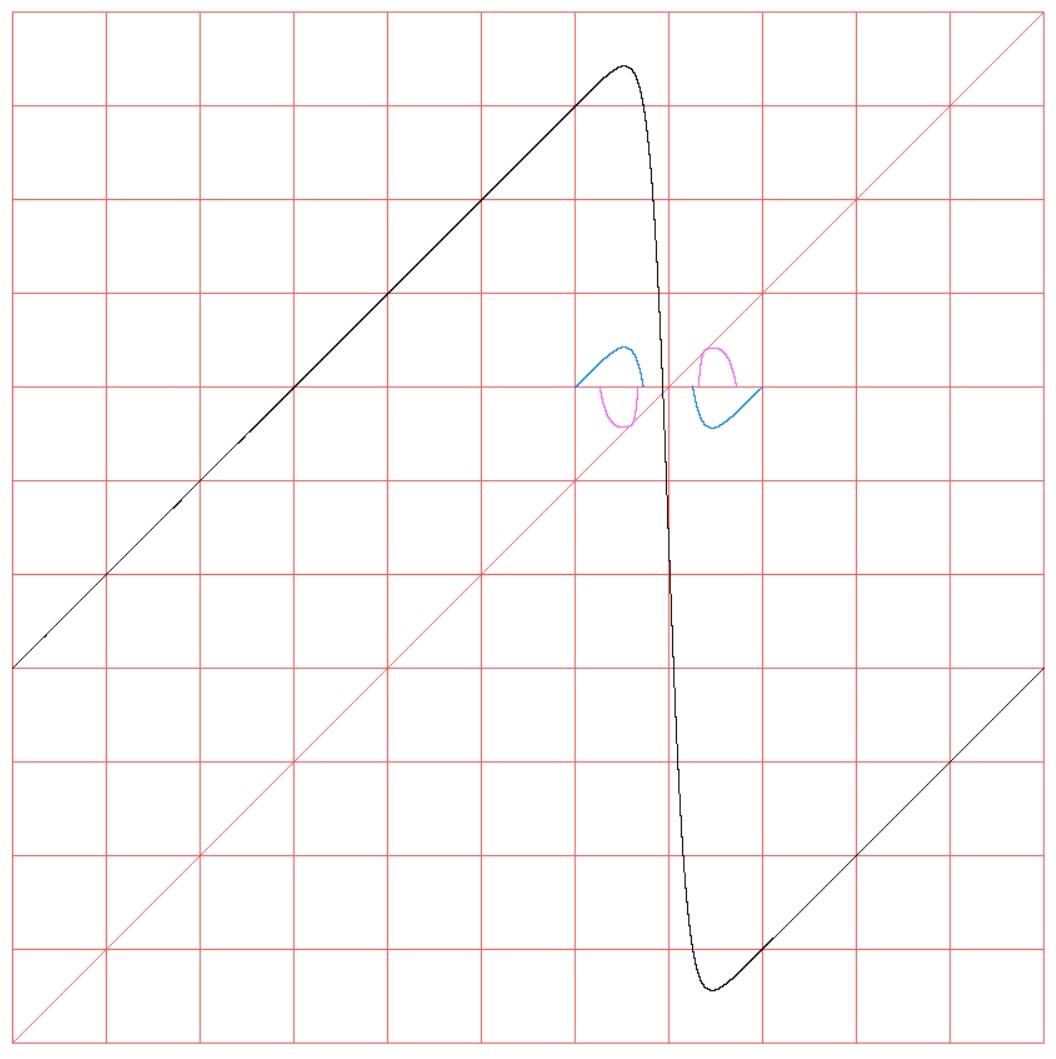}
\caption{Here $b=4/11$.}\label{4a4}
\end{center}
\end{subfigure}
\caption{Graphs of an EOS map $F$ for $a=110$ on $[b-1,b]$. In blue we see the graphs of
  $F^{r_j}$ on $J_j$, and in magenta the graphs of $F^n$ on $K_j$.}
  \label{fig-a}
\end{figure}

When we vary $k$, the graph of $\wf$ (and of $F$) just moves up or
down. Therefore the graphs of $\wf^{r_j}$ on $J_j$ ($j=0,1$) stay the
same. This is another way of saying that the maps $\wf^{r_j}$ on $J_j$
stay the same. To get $\wf^n$, we have to compose the maps $\wf^{r_0}$
on $J_0$ and $\wf^{r_1}$ on $J_1$ (clearly, $r_0+r_1=n$). It makes
sense to look at $\wf^n$ on slightly smaller intervals $K_j\subset
J_j$, where $\wf^n(K_j)\subset K_j$. In our figures those graphs are
marked in magenta. By what we said, they are independent of $k$.

There is a small difference between $F^n$ and $\wf^n$ on those
intervals $K_j$, but it seems that it is so small, that is practically
invisible. Thus, for instance, existence of an attracting fixed point
of $\wf^n$ on $K_1$ and its position (those are independent of $k$) is
almost the same as the existence of an attracting fixed point of $F^n$
on $K_1$ and its position. This explains why the value of $a$ for
which an attracting periodic orbit of period $n$ is born, and the
position of the smallest positive point of this orbit, are practically
independent of $k$.

The explanation of the fact that the values of $a$ for which the left
or right critical point is periodic of period $n$ are almost the same
for the left and right critical points and almost independent of $k$,
is similar. Namely, one can easily check that $F(x)+F(-x)=2b-1$, and
it follows that $J_1=-J_0$ and for $x\in J_0$ we have
$\wf^{r_1}(-x)=-\wf^{r_0}(x)$. Therefore, for $x\in K_0\cup K_1$ we
have $\wf^n(-x)=-\wf^n(x)$. This shows independence of left-right for
$\wf$, and since $\wf^n$ on $K_0\cup K_1$ is independent of $k$, also
independence of those values of $a$ on $k$. Since $F$ is very close to
$\wf$, we get for $F$ almost independence.
\medskip

{\bf Farey neighbors.\footnote{The reader not familiar with the Farey tree,
can find its description and terminology for instance in \cite{Devaney1999}.}}
In the bifurcation diagrams for $b=k/11$ (we observe similar
phenomenon for other denominators, too) we see another interesting
thing. %Figures~\ref{bbb1}--\ref{bbb5} are like Figure~\ref{fig-a},
Figure \ref{bbb} is like Figure~\ref{fig-bif},
but $a$ varies from $80$ to $89$. We
see in the bifurcation diagram periodic windows at the approximately
same place for each $k$. For those periodic orbits
one point is very close (for some value of $a$ equal) to one of the
critical points. However, there is another point of the orbit which is
in the second lap. All other points of the orbit are in the first and
third lap. If we look at the period $q$ of the orbit and the number
$p$ of the points in the third lap (plus the point which is maybe in
the second lap, but is close to a critical point), then the
fraction $p/q$ is a Farey neighbor of $k/11$. In fact, it is one of
the Farey parents of $k/11$, and the one with the larger denominator.
For instance, $\frac3{11}=\frac{1+2}{4+7}$ and $2\cdot4-1\cdot7=1$, so
the Farey parents of $3/11$ are $1/4$ and $2/7$; what we see in
Figure~\ref{bbb} with $k=3$ is $p/q=2/7$.

\begin{figure}[h!]
\centering
\begin{subfigure}{0.48\textwidth}
\begin{center}
\includegraphics[width=1.0\textwidth]{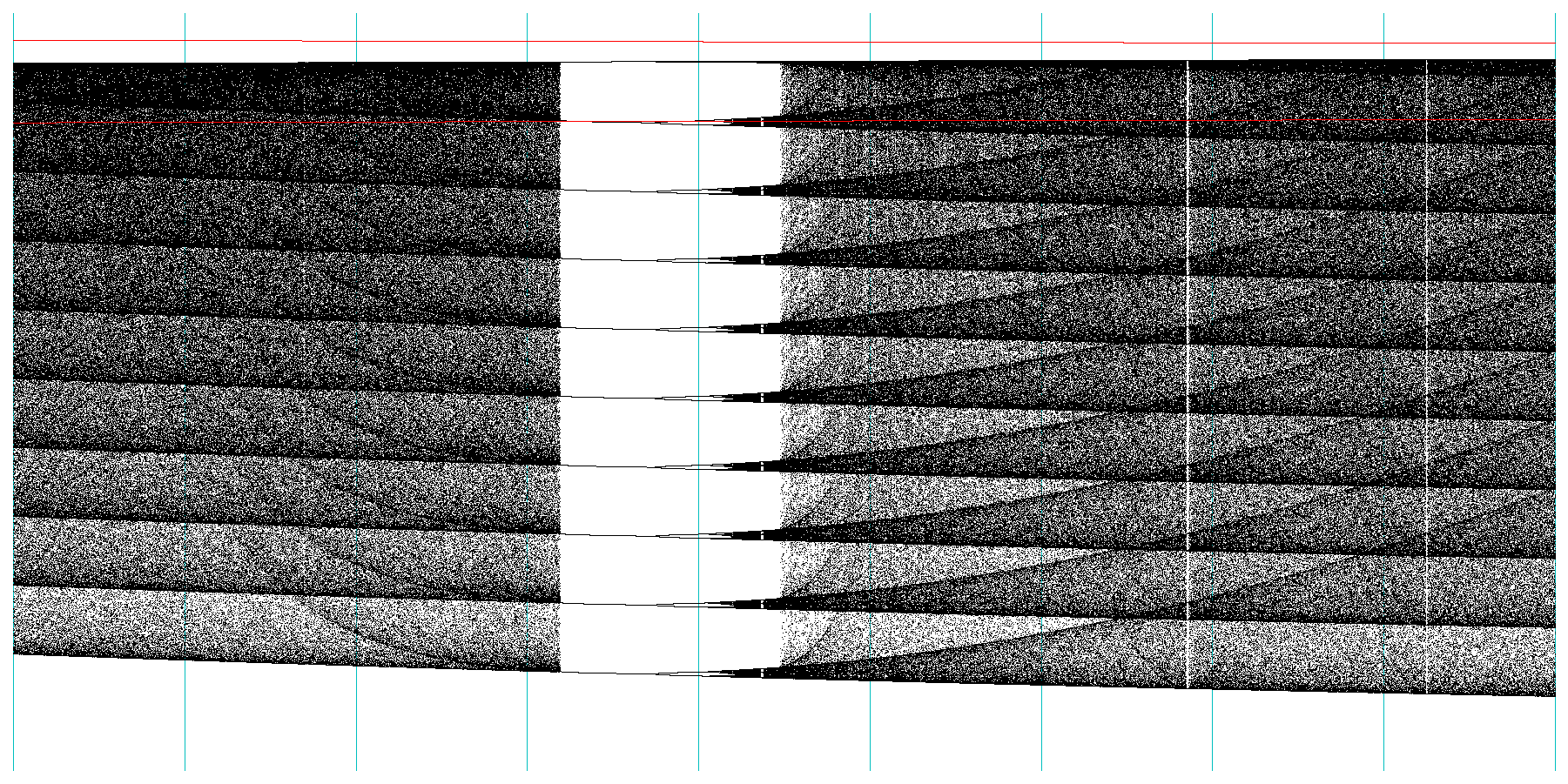}
%\caption{Here $b=1/11$.}\label{bbb1}
\end{center}
\end{subfigure}
\hfill
\begin{subfigure}{0.48\textwidth}
\begin{center}
\includegraphics[width=1.0\textwidth]{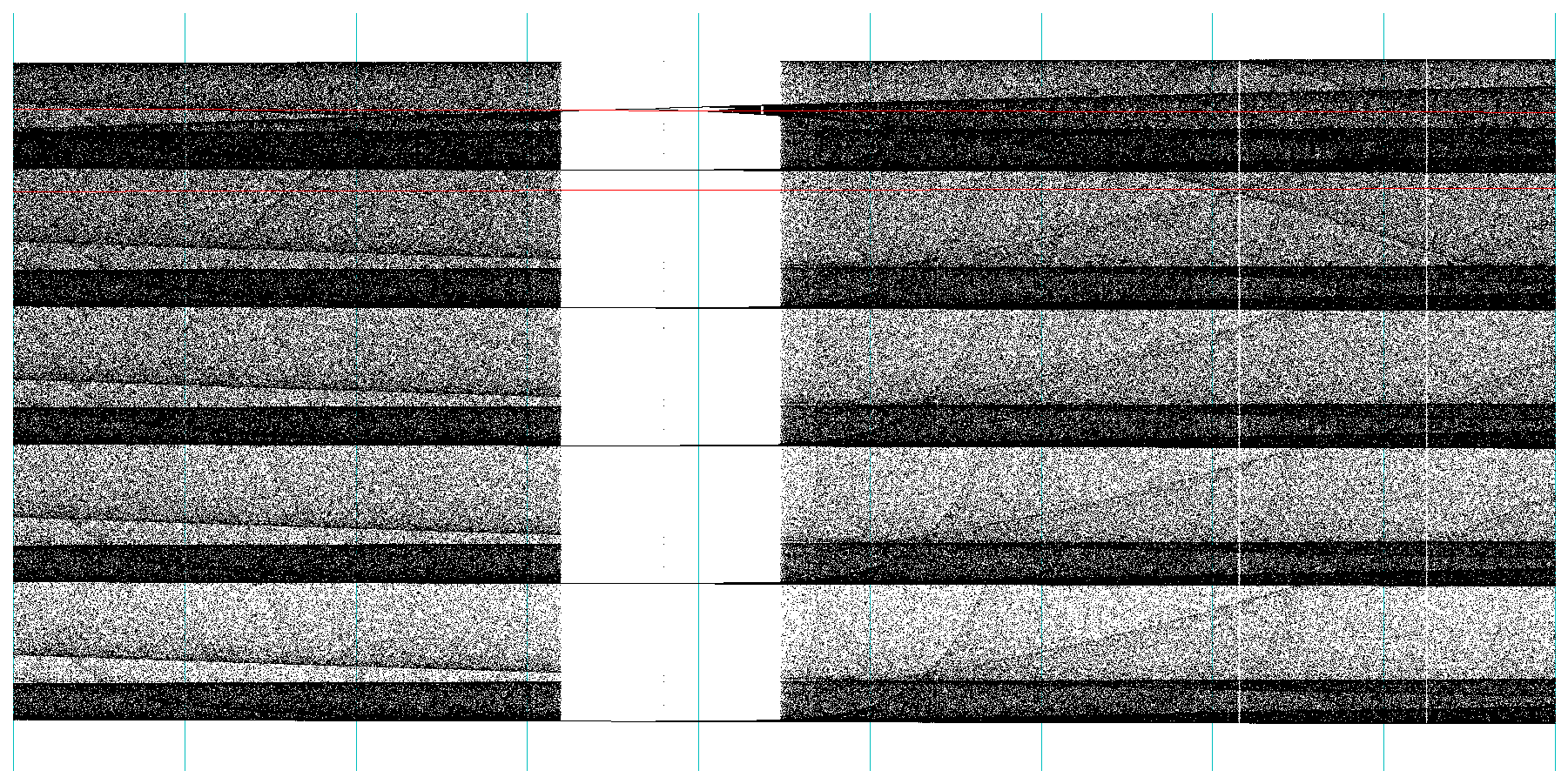}
%\caption{Here $b=2/11$.}\label{bbb2}
\end{center}
\end{subfigure}
\hfill
\begin{subfigure}{0.48\textwidth}
\begin{center}
\includegraphics[width=1.0\textwidth]{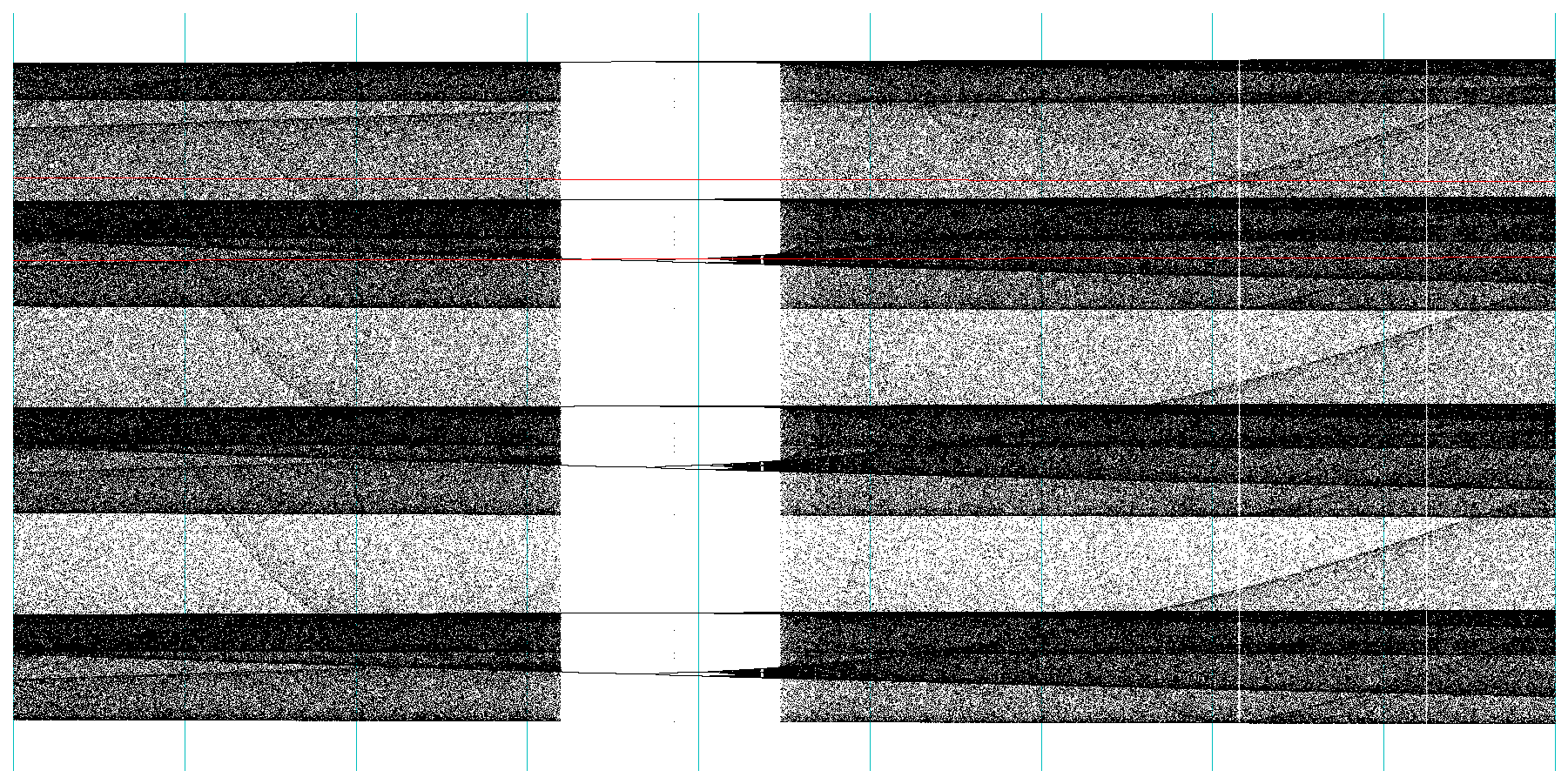}
%\caption{Here $b=3/11$.}\label{bbb3}
\end{center}
\end{subfigure}
\hfill
%\caption{Bifurcation diagrams}
%\end{figure}
%
%\begin{figure}[h!]
%\centering
\begin{subfigure}{0.48\textwidth}
\begin{center}
\includegraphics[width=1.0\textwidth]{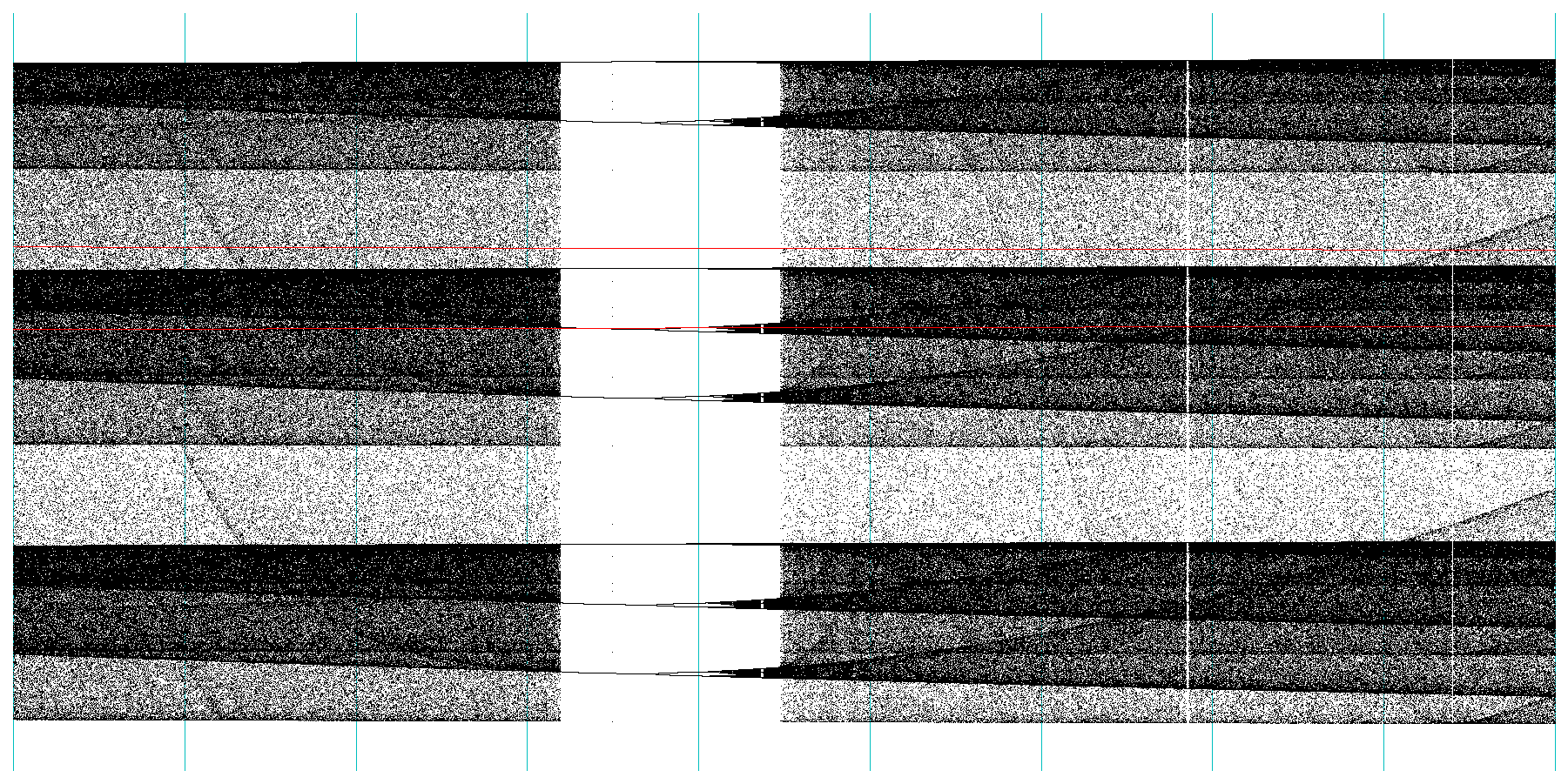}
%\caption{Here $b=4/11$.}\label{bbb4}
\end{center}
\end{subfigure}
\hfill
\begin{subfigure}{0.48\textwidth}
\begin{center}
\includegraphics[width=1.0\textwidth]{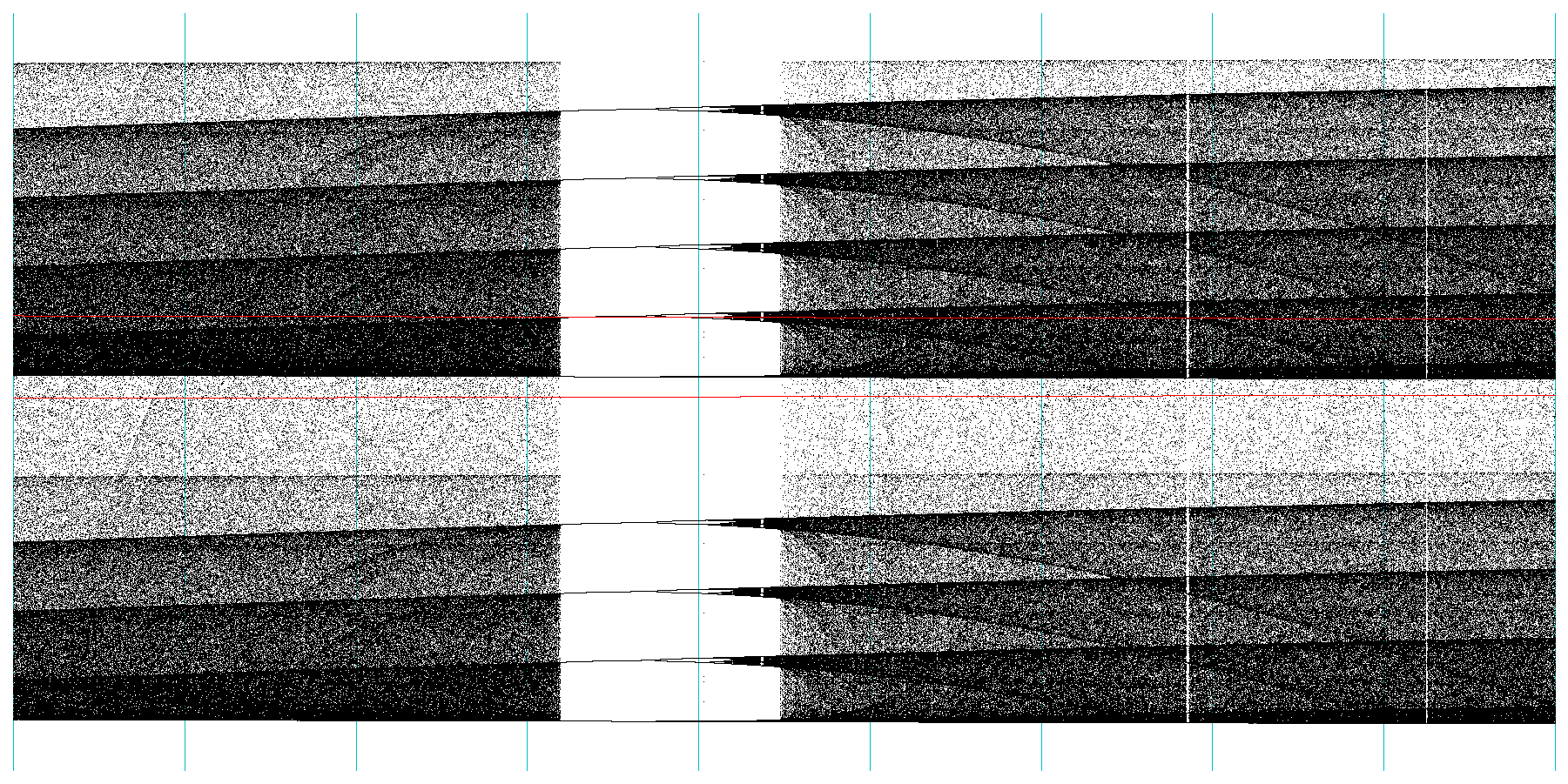}
%\caption{Here $b=5/11$.}\label{bbb5}
\end{center}
\end{subfigure}
\hfill
\begin{subfigure}{0.48\textwidth}
\begin{center}

\end{center}
\end{subfigure}
\caption{Bifurcation diagrams for
$b=k/11$, where $k=1,2,3,4,5$. On the horizontal axis there is $a$,
from $80$ to $89$; the vertical cyan lines are every $1$.
Horizontal red lines indicate the positions of the critical points.}\label{bbb}
\end{figure}

In order to understand this phenomenon, let us look at
Figure~\ref{5a3}. There $a=83.3$ is in the window we mention. Blue
lines indicate the position of the points of the attracting periodic
orbit of period 7.

Let us divide the interval $[b-1,b]$ into $n$ subintervals of length
$1/n$ each, and number those intervals $0,1,\dots,n-1$ from left to
right. For the large values of $a$, where there is an attracting
periodic orbit of period $n$, if we look where the consecutive (in
time) points of this orbit reside, we get the same result as for $G$,
that is, to get the location of the next point we just add $k$
modulo $n$. Since our window is the first large window to the left of
those large values of $a$, the change has to be the smallest possible
one. That is, at one moment instead of adding $k$, we add $k-1$ or
$k+1$ modulo $n$. In our example (Figure~\ref{5a3}) the point located
in the interval number 8 is mapped to the point in the interval number
1 (instead of number 0).

\begin{figure}[h!]
\begin{center}
\includegraphics[width=80truemm]{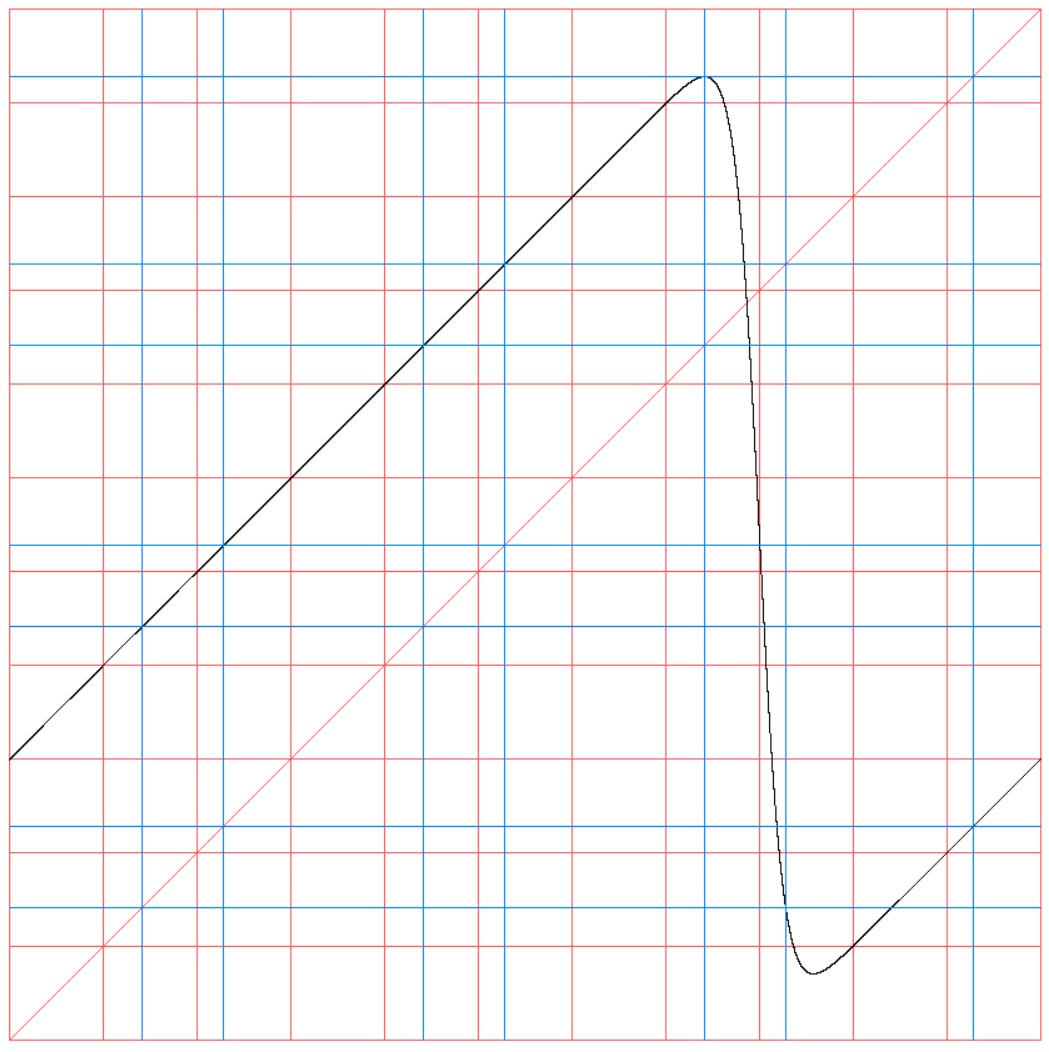}
\caption{Here $b=3/11$, $a=83.3$.}\label{5a3}
\end{center}
\end{figure}

This means that we get a periodic orbit of some period $m<n$ such that
$km$ modulo $n$ is 1 or $-1$. That is, for some $i$ we have
$km=in+1$ or $km=in-1$. Therefore $i/m$ is a Farey neighbor of $k/n$.
Since $m<n$, $i/m$ is one of the Farey parents of $k/n$.

As in the preceding explanations, we may replace $F$ by $\wf$, and
this explains why the relevant values of $a$ are almost independent of
$k$.

The only thing that is not quite clear is why we get the Farey parent
with the larger denominator. We speculate that a smaller change of the
period means a smaller change of $a$ (remember, that we are looking at
the big window closest to the infinite window with period $n$).

%As before, in order to convert what we wrote into a proof, we would
%need to make several subtle (and difficult) estimates.

Note that the explanations given in this section lack rigorous estimates,
so they cannot be considered rigorous proofs.

%\begin{figure}[h!]
%\begin{center}
%\includegraphics[width=80truemm]{wykres5a3.jpg}
%\caption{Here $b=3/11$, $a=83.3$.}\label{5a3}
%\end{center}
%\end{figure}

%\section{Conclusions}

%\vskip 0.5in
%{\bf CRediT authorship contribution statement.}
%
%\vspace{0.2cm}
%
%{\bf Jakub Bielawski:} Methodology, Investigation, Writing - Original Draft, Writing - Review \& Editing.
%{\bf Thiparat Chotibut:} Methodology, Investigation, Writing - Original Draft, Writing - Review \& Editing.
%{\bf Fryderyk Falniowski:} Methodology, Investigation, Writing - Original Draft, Writing - Review \& Editing.
%{\bf Michał Misiurewicz:} Methodology, Investigation, Writing - Original Draft, Writing - Review \& Editing.
%{\bf Georgios Piliouras: } Methodology, Investigation, Writing - Original Draft, Writing - Review \& Editing.

\vskip 0.3in
{\bf Declaration of competing interest.} The authors declare that they have no known competing financial interests or personal relationships that could have appeared to influence the work reported in this paper.
\vskip 0.3in
{\bf Data availability.} No data was used for the research described in the article.
\vskip 0.3in
{\bf Acknowledgments.} Jakub Bielawski and Fryderyk Falniowski acknowledge the support of the National Science Centre, Poland, grant number 2023/51/B/HS4/01343. Thiparat Chotibut acknowledges the funding support from the NSRF via the Program Management Unit for Human Resources \& Institutional Development, Research and Innovation [grant number B39G670016].

%%\bibliographystyle{abbrv}
%%\bibliography{ms}

\begin{thebibliography}{9}

\begin{comment}
    
\bibitem{eirola1996chaotic}
T.~Eirola, A.~V. Osipov, and G.~S{\"o}derbacka.
\newblock {\em Chaotic regimes in a dynamical system of the type many
  predators-one prey}.
\newblock Helsinki University of Technology, Institute of Mathematics, 1996.

\bibitem{kryzhevich2021bistability}
S.~Kryzhevich, V.~Avrutin, and G.~S{\"o}derbacka.
\newblock Bistability in a one-dimensional model of a two-predators-one-prey
  population dynamics system.
\newblock {\em Lobachevskii Journal of Mathematics}, 42(14):3486--3496, 2021.

\bibitem{bielawskiheterogeneity}
J.~Bielawski, T.~Chotibut, F.~Falniowski, M.~Misiurewicz, and G.~Piliouras.
\newblock Heterogeneity, reinforcement learning, and chaos in population games.
\newblock {\em Proceedings of National Academy of Sciences}.

\bibitem{bielawski2022memory}
J.~Bielawski, T.~Chotibut, F.~Falniowski, M.~Misiurewicz, and G.~Piliouras.
\newblock Memory loss can prevent chaos in games dynamics.
\newblock {\em Chaos: An Interdisciplinary Journal of Nonlinear Science},
  34(1), 2024.

\bibitem{Arora05themultiplicative}
S.~Arora, E.~Hazan, and S.~Kale.
\newblock The multiplicative weights update method: a meta-algorithm and
  applications.
\newblock {\em Theory of Computing}, 8(1):121--164, 2012.

\bibitem{M1}
M.~Misiurewicz.
\newblock Absolutely continuous measures for certain maps of an interval.
\newblock {\em Publications Math{\'e}matiques de l'IH{\'E}S}, 53:17--51, 1981.

\bibitem{M2}
M.~Misiurewicz.
\newblock Maps of an interval.
\newblock In {\em Chaotic behaviour of deterministic systems}, pages 565--590.
  North-Holland, 1983.

\bibitem{chotibut2021family}
T.~Chotibut, F.~Falniowski, M.~Misiurewicz, and G.~Piliouras.
\newblock Family of chaotic maps from game theory.
\newblock {\em Dynamical Systems}, 36(1):48--63, 2021.

\bibitem{Devaney1999}
R.~L. Devaney.
\newblock The {M}andelbrot set, the {F}arey tree, and the {F}ibonacci sequence.
\newblock {\em The American Mathematical Monthly}, 106(4):289 -- 302, 1999.
\end{comment}


\bibitem{eirola1996chaotic}
T.~Eirola, A.~V. Osipov, and G.~S{\"o}derbacka.
\newblock {\em Chaotic regimes in a dynamical system of the type many
  predators-one prey}.
\newblock Helsinki University of Technology, Math. Report A368, 1996.
{\tt \url{http://www.math.hut.fi/\%7Eteirola/PS/Pedot.ps}}

\bibitem{kryzhevich2021bistability}
S.~Kryzhevich, V.~Avrutin, and G.~S{\"o}derbacka.
\newblock Bistability in a one-dimensional model of a two-predators-one-prey
  population dynamics system.
\newblock {\em Lobachevskii Journal of Mathematics},
42(14):3486--3496, 2021.
{\tt  \url{https://doi.org/10.1134/S1995080222020135}}

\bibitem{bielawskiheterogeneity}
J.~Bielawski, T.~Chotibut, F.~Falniowski, M.~Misiurewicz, and G.~Piliouras.
\newblock Heterogeneity, reinforcement learning, and chaos in population games.
\newblock To be published.
%\newblock {\em Proceedings of National Academy of Sciences}.

\bibitem{bielawski2022memory}
J.~Bielawski, T.~Chotibut, F.~Falniowski, M.~Misiurewicz, and G.~Piliouras.
\newblock Memory loss can prevent chaos in games dynamics.
\newblock {\em Chaos: An Interdisciplinary Journal of Nonlinear Science},
  34(1), 2024.
    {\tt  \url{https://doi.org/10.1063/5.0184318}}

\bibitem{chotibut2021family}
T.~Chotibut, F.~Falniowski, M.~Misiurewicz, and G.~Piliouras.
\newblock Family of chaotic maps from game theory.
\newblock {\em Dynamical Systems}, 36(1):48--63, 2021.
{\tt  \url{https://doi.org/10.1080/14689367.2020.1795624}}

\bibitem{Arora05themultiplicative}
S.~Arora, E.~Hazan, and S.~Kale.
\newblock The multiplicative weights update method: a meta-algorithm and
  applications.
\newblock {\em Theory of Computing}, 8(1):121--164, 2012.
    {\tt  \url{http://dx.doi.org/10.4086/toc.2012.v008a006}}

\bibitem{M2}
M.~Misiurewicz.
\newblock Maps of an interval.
\newblock In {\em Chaotic behaviour of deterministic systems}, pages 565--590.
  North-Holland, 1983.

\bibitem{M1}
M.~Misiurewicz.
\newblock Absolutely continuous measures for certain maps of an interval.
\newblock {\em Publications Math{\'e}matiques de l'IH{\'E}S},
53:17--51, 1981.
{\tt  \url{https://doi.org/10.1007/BF02698686}}



\bibitem{Devaney1999}
R.~L. Devaney.
\newblock The Mandelbrot set, the Farey tree, and the Fibonacci sequence.
\newblock {\em The American Mathematical Monthly}, 106(4):289 -- 302,
1999.
{\tt  \url{https://doi.org/10.2307/2589552}}


\end{thebibliography}

\end{document}